\documentclass[twoside, a4paper, 10pt]{amsart}
\title[ ]{Spherical recurrence and locally isometric embeddings of trees into positive density subsets of $\mathbb{Z}^d$}
\usepackage{amsaddr}
\author{Kamil Bulinski}
\address{School of Mathematics and Statistics F07, University of Sydney, NSW 2006, Australia}
\email{K.Bulinski@maths.usyd.edu.au}
\usepackage{amsfonts}
\usepackage{amsthm}
\usepackage{verbatim}
\usepackage{amsmath, amssymb}
\usepackage{tikz}
\usetikzlibrary{matrix, arrows}
\usepackage{listings}
\usepackage{color}
\usepackage{listings}
\usepackage[all]{xy}
\usepackage[pdftex,colorlinks,linkcolor=blue,citecolor=blue]{hyperref}
\usepackage{graphicx}
\usepackage{float}
\usepackage[margin=3cm]{geometry}
\usepackage{bigints}
\usepackage{dsfont}
\setlength{\textwidth}{6.5in} 
\setlength{\oddsidemargin}{0in}
\setlength{\evensidemargin}{0in}
\setlength{\parindent}{0pt}
\setlength{\parskip}{1ex plus 0.5ex minus 0.2ex}
\linespread{1.3}
\begin{document}
\maketitle
\raggedbottom

\newcommand{\cA}{\mathcal{A}}
\newcommand{\cB}{\mathcal{B}}
\newcommand{\cC}{\mathcal{C}}
\newcommand{\cD}{\mathcal{D}}
\newcommand{\cE}{\mathcal{E}}
\newcommand{\cF}{\mathcal{F}}
\newcommand{\cG}{\mathcal{G}}
\newcommand{\cH}{\mathcal{H}}
\newcommand{\cI}{\mathcal{I}}
\newcommand{\cJ}{\mathcal{J}}
\newcommand{\cK}{\mathcal{K}}
\newcommand{\cL}{\mathcal{L}}
\newcommand{\cM}{\mathcal{M}}
\newcommand{\cN}{\mathcal{N}}
\newcommand{\cO}{\mathcal{O}}
\newcommand{\cP}{\mathcal{P}}
\newcommand{\cQ}{\mathcal{Q}}
\newcommand{\cR}{\mathcal{R}}
\newcommand{\cS}{\mathcal{S}}
\newcommand{\cT}{\mathcal{T}}
\newcommand{\cU}{\mathcal{U}}
\newcommand{\cV}{\mathcal{V}}
\newcommand{\cW}{\mathcal{W}}
\newcommand{\cX}{\mathcal{X}}
\newcommand{\cY}{\mathcal{Y}}
\newcommand{\cZ}{\mathcal{Z}}
\newcommand{\bA}{\mathbb{A}}
\newcommand{\bB}{\mathbb{B}}
\newcommand{\bC}{\mathbb{C}}
\newcommand{\bD}{\mathbb{D}}
\newcommand{\bE}{\mathbb{E}}
\newcommand{\bF}{\mathbb{F}}
\newcommand{\bG}{\mathbb{G}}
\newcommand{\bH}{\mathbb{H}}
\newcommand{\bI}{\mathbb{I}}
\newcommand{\bJ}{\mathbb{J}}
\newcommand{\bK}{\mathbb{K}}
\newcommand{\bL}{\mathbb{L}}
\newcommand{\bM}{\mathbb{M}}
\newcommand{\bN}{\mathbb{N}}
\newcommand{\bO}{\mathbb{O}}
\newcommand{\bP}{\mathbb{P}}
\newcommand{\bQ}{\mathbb{Q}}
\newcommand{\bR}{\mathbb{R}}
\newcommand{\bS}{\mathbb{S}}
\newcommand{\bT}{\mathbb{T}}
\newcommand{\bU}{\mathbb{U}}
\newcommand{\bV}{\mathbb{V}}
\newcommand{\bW}{\mathbb{W}}
\newcommand{\bX}{\mathbb{X}}
\newcommand{\bY}{\mathbb{Y}}
\newcommand{\bZ}{\mathbb{Z}}

\newcounter{dummy} \numberwithin{dummy}{section}

\theoremstyle{definition}
\newtheorem{mydef}[dummy]{Definition}
\newtheorem{prop}[dummy]{Proposition}
\newtheorem{corol}[dummy]{Corollary}
\newtheorem{thm}[dummy]{Theorem}
\newtheorem{lemma}[dummy]{Lemma}
\newtheorem{eg}[dummy]{Example}
\newtheorem{notation}[dummy]{Notation}
\newtheorem{remark}[dummy]{Remark}
\newtheorem{claim}[dummy]{Claim}
\newtheorem{Exercise}[dummy]{Exercise}
\newtheorem{question}[dummy]{Question}

\begin{abstract} Magyar has shown that if $B \subset \bZ^d$ has positive upper density $(d \geq 5)$, then the set of squared distances $\{ \|b_1-b_2 \|^2 \text{ }: \text{ } b_1,b_2 \in B \}$ contains an infinitely long arithmetic progression, whose period depends only on the upper density of $B$. We extend this result by showing that $B$ contains locally isometrically embedded copies of every tree with edge lengths in some given arithmetic progression (whose period depends only on the upper density of $B$ and the number of vertices of the sought tree). In particular, $B$ contains all chains of elements with gaps in some given arithmetic progression (which depends on the length of the sought chain). This is a discrete analogue of a result obtained recently by Bennett, Iosevich and Taylor on chains with prescribed gaps in sets of large Haussdorf dimension. Our techniques are Ergodic theoretic and may be of independent interest to Ergodic theorists. In particular, we obtain Ergodic theoretic analogues of recent \textit{optimal spherical distribution} results of Lyall and Magyar which, via Furstenberg's correspondence principle, recover their combinatorial results.  \end{abstract}

\section{Introduction}

\subsection{Combinatorial results}

We recall the following result of Magyar on the existence of infinite arithmetic progressions in the distance sets of positive density subsets of $\bZ^d$, for $d \geq 5$.

\begin{thm}[Magyar \cite{Magyar}]\label{Magyar original} For all $\epsilon>0$ and integers $d \geq 5$ there exists a positive integer $q=q(\epsilon,d)$ such that the following holds: If $B \subset \bZ^d$ has upper Banach density $$d^*(B):= \lim_{L \to \infty} \max_{x \in \bZ^d} \frac{|B \cap (x+[0,L)^d)|}{L^d}> \epsilon,$$ then there exists a positive integer $N_0=N_0(B)$ such that\footnote{In this paper, $\|v\|=\|v\|_2$ will always mean the standard Euclidean $L^2$-norm.} $$ q\bZ_{>N_0} \subset \left\{ \| b_1 - b_2\|^2 \text{ } | \text{ } b_1,b_2 \in B \right\}.$$\end{thm}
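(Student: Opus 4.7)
The plan is to prove this ergodic-theoretically via Furstenberg's correspondence principle. First I would apply the correspondence to build, from $B \subset \bZ^d$ with $d^*(B) > \epsilon$, a probability measure-preserving $\bZ^d$-system $(X,\cB,\mu,\{T^v\}_{v\in\bZ^d})$ together with a measurable set $A \subset X$ of measure $\mu(A) \geq d^*(B) > \epsilon$, such that $\mu(A \cap T^v A) > 0$ forces $v \in B - B$ and hence $\|v\|^2 \in \{\|b_1 - b_2\|^2 : b_1,b_2 \in B\}$. The theorem then reduces to producing $q = q(\epsilon,d)$ and a threshold $N_0 = N_0(B)$ so that for every $N \geq N_0$ there exists $v \in \bZ^d$ with $\|v\|^2 = qN$ and $\mu(A \cap T^v A) > 0$.

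To find such a $v$, I introduce the spherical averaging operator $A_N f := |S_N|^{-1} \sum_{v \in S_N} T^v f$, where $S_N := \{v \in \bZ^d : \|v\|^2 = N\}$; positivity of the pairing $\langle A_{qN} \mathbf{1}_A, \mathbf{1}_A \rangle_{L^2(\mu)}$ immediately yields some $v \in S_{qN}$ with $\mu(A \cap T^v A) > 0$. The bulk of the work is therefore to exhibit $q$ such that
$$ \liminf_{N \to \infty,\ N \in q\bN} \langle A_N \mathbf{1}_A, \mathbf{1}_A \rangle \;\geq\; \mu(A)^2 \;>\; \epsilon^2. $$
By the spectral theorem applied to the unitary $\bZ^d$-representation $\{T^v\}$, this pairing equals $\int_{\bT^d} \widehat{\sigma_N}(\xi)\, d\mu_{\mathbf{1}_A}(\xi)$, where $\widehat{\sigma_N}(\xi) := |S_N|^{-1} \sum_{v \in S_N} e^{2 \pi i \xi \cdot v}$ is the normalised spherical Fourier multiplier and $\mu_{\mathbf{1}_A}$ is the spectral measure of $\mathbf{1}_A$. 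The Hardy--Littlewood circle method (this is where $d \geq 5$ is crucial, ensuring the minor arcs contribute $o(1)$) yields an asymptotic expansion of $\widehat{\sigma_N}$ supported on rational frequencies $\xi = p/q'$, where the contribution factors as a normalised Gauss sum depending on $N \bmod q'$ times a bounded archimedean factor (essentially a Bessel transform of spherical surface measure) equal to $1$ at $\xi = 0$.

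The main obstacle is locating $q$: the trivial frequency $\xi = 0$ contributes exactly $\mu(A)^2$, which is what we want, but $\mathbf{1}_A$ may carry nontrivial spectral mass at rational frequencies $p/q'$ with $q' > 1$, whose Gauss sums could in principle interfere destructively. The resolution exploits the finiteness $\mu_{\mathbf{1}_A}(\bT^d) = \mu(A) \leq 1$: for every $\delta > 0$ there are only finitely many rationals carrying spectral mass exceeding $\delta$, and in particular only denominators up to some $Q = Q(\epsilon)$ carry combined mass larger than $\epsilon^2/2$. Taking $q$ to be a suitable multiple of $\mathrm{lcm}\{q' : q' \leq Q(\epsilon)\}$ then forces the normalised Gauss sums at every relevant frequency to stabilise at their \emph{diagonal} (non-negative) value, so that the contributions of nontrivial rational frequencies become asymptotically non-negative and cannot drag the pairing below $\mu(A)^2$; the small remaining rational mass and the $o(1)$ minor-arc error are absorbed. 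Reversing the Furstenberg correspondence then produces the claimed arithmetic progression $q\bZ_{>N_0}$ in the distance set.
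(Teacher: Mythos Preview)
Your outline is on the right track up to the spectral reformulation, but there is a genuine gap at the point where you fix $q$ depending only on $\epsilon$ and $d$. The assertion that ``only denominators up to some $Q=Q(\epsilon)$ carry combined spectral mass larger than $\epsilon^2/2$'' does \emph{not} follow from the finiteness of $\mu_{\mathds{1}_A}(\bT^d)$: the spectral measure of $\mathds{1}_A$ may concentrate on rationals with arbitrarily large denominator, and the hypothesis $\mu(A)>\epsilon$ alone places no restriction on where that mass sits. The $Q$ you obtain this way depends on the system, hence on $B$, so your $q=\operatorname{lcm}\{q'\leq Q\}$ is not of the required form $q(\epsilon,d)$. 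Separately, the claim that restricting to $N\in q\bZ$ makes the Gauss-sum contribution at each surviving frequency $p/q'$ ``stabilise at a non-negative diagonal value'' is unjustified: for $\widehat{\sigma}_{qN}(p/q')$ there is no positivity mechanism of this kind (you may be thinking of dilated spheres $qS_N$, for which $\widehat{\sigma}_N(q\cdot p/q')=1$ when $q'\mid q$, but you wrote $A_{qN}$, not $\frac{1}{|S_N|}\sum_{v\in S_N}T^{qv}$, and in any case the dilated version only shifts the problem to the frequencies in $(qq_\eta)^{-1}\bZ^d\setminus q^{-1}\bZ^d$).

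The paper resolves exactly this obstruction with an iterative \emph{measure increment} (Lemma~\ref{lemma: measure increment}, the ergodic analogue of Magyar's density increment). The exponential-sum estimate (Theorem~\ref{thm: exponential sums}) already makes $|\widehat{\sigma}_N(\xi)|\leq\eta$ for every $\xi$ outside $q_\eta^{-1}\bZ^d+[\text{small}]$, so only the finitely many eigenspaces $\text{Eig}_T(\chi)$ with $\chi\in R^*_{q_\eta}$ are dangerous. If $\mathds{1}_B$ has too much mass there, one passes to a $T^{q_\eta}$-ergodic component on which $\mu(B)$ increases by a factor $(1+\delta)$; since $\mu(B)\leq 1$ this terminates in at most $\log(\epsilon^{-1})/\log(1+\delta)$ steps, producing a component for $T^Q$ (with $Q$ a bounded power of $q_\eta$, hence $Q=Q(\epsilon,d)$) on which $B$ is $(q_\eta,\delta)$-equidistributed. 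At that stage the Spherical Mean Ergodic Theorem and Cauchy--Schwarz give $\frac{1}{|S_N|}\sum_{a\in S_N}\mu(B\cap T^{Qa}B)>\mu(B)^2-\epsilon$ directly. Your spectral framework is correct, but without this increment step there is no device that makes $q$ uniform in the system.
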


Our main goal is to prove the existence of other infinite families of Euclidean configurations in positive density subsets of $\bZ^d$. Before we state the main result, let us provide an instructive example of a special case.

\begin{prop}[Chains with prescribed gaps]\label{prop: chains}  For all $\epsilon>0$ and integers $m>0$ and $d \geq 5$ there exists a positive integer $q=q(\epsilon,m,d)>0$ such that the following holds: For all $B \subset \bZ^d$ with $d^*(B)>\epsilon$ there exists a positive integer $N_0=N_0(B,m)$ such that for all integers $t_1, \ldots, t_m\geq N_0$ there exist \textbf{distinct} $b_1, \ldots, b_{m+1} \in B$ such that $$\|b_{i}-b_{i+1}\|^2=qt_i \text{ for all $i=1,\ldots, m$.} $$  \end{prop}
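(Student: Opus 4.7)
The plan is to pass from the combinatorial setting to an ergodic-theoretic one via Furstenberg's correspondence principle and then prove a quantitative iterated spherical recurrence theorem. Concretely, I would first apply the correspondence principle to produce a probability measure preserving $\bZ^d$-action $(X,\mathcal{B},\mu,T)$ together with a measurable set $A \subset X$ satisfying $\mu(A) \geq d^*(B) > \epsilon$, so that any configuration of step vectors $v_1,\ldots,v_m \in \bZ^d$ for which
$$\mu\!\left(A \cap T^{v_1}A \cap T^{v_1+v_2}A \cap \cdots \cap T^{v_1+\cdots+v_m}A\right) > 0$$
yields, after unpacking the correspondence, an actual chain $b_1,\ldots,b_{m+1} \in B$ with $b_{i+1}-b_i = v_i$, and hence $\|b_{i+1}-b_i\|^2 = \|v_i\|^2$.

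The heart of the argument is the following iterated spherical recurrence, which I would establish by induction on $m$: there exist $q = q(\epsilon,m,d)$ and, for each such $A$, an integer $N_0 = N_0(A,m)$, such that for all $t_1,\ldots,t_m \geq N_0$,
$$\frac{1}{\prod_{i=1}^m |S_{qt_i}|} \sum_{v_1 \in S_{qt_1}} \!\!\cdots\!\! \sum_{v_m \in S_{qt_m}} \mu\!\left(A \cap \bigcap_{i=1}^m T^{v_1+\cdots+v_i}A\right) \geq \mu(A)^{m+1} - o(1),$$
where $S_r := \{v \in \bZ^d : \|v\|^2 = r\}$. The base case $m=1$ is the ergodic-theoretic analogue of Magyar's spherical distribution theorem (in its sharpened Lyall--Magyar form), which is the ``optimal spherical distribution'' result the abstract alludes to. The inductive step relies on the fact that, once $t$ is restricted to the correct arithmetic progression, the spherical averaging operator $A_r : f \mapsto |S_r|^{-1}\sum_{v \in S_r} T^v f$ converges in $L^2(\mu)$ to $\int f\,d\mu$ modulo the relevant structured (Kronecker-type) factor. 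The $\ell^2$-theory of discrete spherical maximal functions of Magyar--Stein--Wainger, which crucially requires $d\geq 5$, supplies the analytic backbone.

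The main obstacle is this inductive step: one must control the non-commuting nested products of spherical averages and show that, in the limit, $A_{qt_m}$ essentially decouples from shifts by $v_1+\cdots+v_{m-1}$. This needs a weak-mixing style estimate for spherical averages relative to the structured factor, which is the ergodic content that has to be extracted from the Lyall--Magyar machinery. Once the iterated recurrence is in hand, the proposition drops out: pick any $(v_1,\ldots,v_m)$ realising a positive $\mu$-intersection and invoke the correspondence. Distinctness of the $b_i$ is a routine afterthought, since a chain with $b_i = b_j$ forces $v_{i+1}+\cdots+v_j = 0$, and the proportion of such degenerate configurations on the product of spheres is $o(1)$ when $d\geq 5$, so cannot absorb the positive main term in the averaged inequality.
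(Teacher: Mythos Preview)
Your proposal is correct and follows essentially the same route as the paper: Furstenberg correspondence, then an inductive $L^2$-estimate for iterated spherical averages (the paper phrases this more generally for trees, chains being the path-graph special case), with distinctness handled at the end by the observation that degenerate tuples occupy an $o(1)$ fraction of the product of spheres. The one point on which you are deliberately vague---how to tame the rational Kronecker factor so that the spherical averages of $\mathds{1}_A$ genuinely converge in $L^2$ to the constant $\mu(A)$ rather than merely to a projection---is precisely where the paper inserts its main device, an ergodic \emph{measure increment} that passes to a $T^Q$-ergodic component on which $A$ is $(q_\eta,\delta)$-equidistributed, after which a Spherical Mean Ergodic Theorem delivers the $L^2$-convergence you need with no residual structured part.
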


In other words, we may \textit{locally isometrically} embed, into a subset $B \subset \bZ^d$ of positive density, all path graphs with edge lengths in some infinite arithmetic progression, whose common difference depends only on the upper Banach density of $B$ and the number of vertices of the given path graph (as well as the dimension $d$). Continuous analogues of such results were recently obtained by Bennett, Iosevich and Taylor in \cite{IosevichChains}, where they prove that a set of large enough Haussdorf dimension in Euclidean space contains long chains whose gaps can assume any value in some fixed open interval of $\bR$ (which depends on the length of the sought chain).

More generally, we show that Proposition~\ref{prop: chains} is true if one replaces path graphs with arbitrary trees. To make this statement precise, we introduce the following terminology.

\begin{mydef}[Edge labelled trees and local isometries]\label{def: intro trees} An edge-labelled tree is a tuple $\tau=(V,E,\phi,L)$ where $(V,E)$ is a finite tree (connected acyclic graph) with vertex set $V$ and edge set $E$ and $\phi:E \to L$ is a function to a set $L$. If $L \subset \bZ_{>0}$ then a \textit{locally isometric embedding} of $\tau$ into $\bZ^d$ is an injective map $\iota: V \to \bZ^d$ such that for each edge $e=\{v_1,v_2\} \in E$ we have that $$\|\iota(v_1)-\iota(v_2)\|^2=\phi(e).$$ In other words, one should think of $\phi(e)$ as being the square of the length of the edge $e$ and $\iota$ as being an embedding that preserves distances between adjacent vertices (but not neccesarily the distance between non-adjacent vertices, hence $\iota$ is only a \textbf{local} isometry).

\end{mydef}

 Our main result may now be stated as follows.

\begin{thm}\label{thm: main comb thm intro} Let $\epsilon>0$, $d \geq 5$ and suppose that $B \subset \bZ^d$ with $d^{\ast}(B)>\epsilon$. Then for each positive integer $m$, there exists a positive integer $q=q(\epsilon,m,d)$ and a positive integer $N_0=N_0(B,m)$ such that whenever $\tau=(V,E,\phi,\bZ_{>N_0})$ is an edge-labelled tree, with $|V|=m$, then there exists a locally isometric embedding $\iota$ of $\tau$ into $\bZ^d$ such that $$q\cdot \iota (V) \subset B - b \quad \text{for some } b \in B.$$ In particular, all edge-labelled trees of the form $\tau=(V,E,\phi, q^2\bZ_{>N_0})$, with $|V|=m$, may be locally isometrically embedded into the set $B$.

\end{thm}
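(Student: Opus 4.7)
Via Furstenberg's correspondence principle for $\bZ^d$-actions, the theorem will reduce to the following ergodic statement: for every $\bZ^d$-measure-preserving system $(X, \mu, T)$ and every measurable $A \subset X$ with $\mu(A) > \epsilon$, there exist $q = q(\epsilon, m, d)$ and $N_0 = N_0(A, m)$ such that for any edge-labelled tree $\tau$ on vertices $u_1, \ldots, u_m$ with labels in $\bZ_{>N_0}$, there exist $v_1 = 0, v_2, \ldots, v_m \in \bZ^d$ with $\|v_i - v_{p(i)}\|^2 = q^2 \phi(\{u_i, u_{p(i)}\})$ along each edge of $\tau$ and $\int_X \prod_{i=1}^m (\mathbf{1}_A \circ T^{v_i})\, d\mu > 0$. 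The combinatorial conclusion will then follow by setting $\iota(u_i) := v_i/q$ and invoking the correspondence.

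The ergodic statement will be proved by induction on $m$, with $m = 1$ trivial. For the inductive step, order the vertices so that $u_m$ is a leaf attached to $u_{p(m)}$ for some $p(m) < m$, and let $\ell := \phi(\{u_m, u_{p(m)}\})$. Writing $v_m = v_{p(m)} + w$ with $\|w\|^2 = q^2\ell$ and averaging $w$ uniformly over this sphere, the multiple correlation factors as
\begin{equation*}
\underset{v_2, \ldots, v_{m-1}}{\mathrm{avg}}\; \int_X \mathbf{1}_A \cdot \prod_{i=2}^{m-1}(\mathbf{1}_A \circ T^{v_i}) \cdot (\cA_{q^2\ell}\mathbf{1}_A) \circ T^{v_{p(m)}} \, d\mu,
\end{equation*}
where $\cA_r f(x) := |S_r|^{-1} \sum_{\|w\|^2 = r} f(T^w x)$ is the spherical averaging operator. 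The key input is the ergodic theoretic optimal spherical distribution theorem (the analog of Lyall--Magyar) established in the earlier sections of this paper: for appropriate $q$ depending only on $\epsilon$ and $d$, $\cA_{q^2\ell}\mathbf{1}_A \to \bE[\mathbf{1}_A \mid \cI]$ in $L^2(\mu)$ as $\ell \to \infty$, where $\cI$ is the $\sigma$-algebra of $T$-invariant sets.

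Since $\bE[\mathbf{1}_A \mid \cI]$ is $T$-invariant, composing with $T^{v_{p(m)}}$ leaves it unchanged, and Cauchy--Schwarz applied with $\|\mathbf{1}_A \prod (\mathbf{1}_A \circ T^{v_i})\|_\infty \leq 1$ bounds the resulting $L^2$-error uniformly in the remaining $v_i$. Hence for $N_0$ sufficiently large, the correlation is well approximated by
\begin{equation*}
\underset{v_2, \ldots, v_{m-1}}{\mathrm{avg}}\; \int_X \mathbf{1}_A \cdot \bE[\mathbf{1}_A \mid \cI] \cdot \prod_{i=2}^{m-1}(\mathbf{1}_A \circ T^{v_i}) \, d\mu.
\end{equation*}
Disintegrating $\mu = \int_\Omega \mu_\omega\, d\nu(\omega)$ into ergodic components, $\bE[\mathbf{1}_A \mid \cI] \equiv \mu_\omega(A)$ on component $\omega$, and this reduces to $\int_\Omega \mu_\omega(A) \cdot \cJ_{m-1}(\omega)\, d\nu(\omega)$, with $\cJ_{m-1}(\omega)$ the corresponding multi-correlation for the sub-tree $u_1, \ldots, u_{m-1}$ in the ergodic system $(X, \mu_\omega, T)$. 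The inductive hypothesis yields $\cJ_{m-1}(\omega) \gtrsim \mu_\omega(A)^{k_{m-1}}$, and Jensen's inequality (convexity of $\alpha \mapsto \alpha^{k_{m-1}+1}$) closes the induction with a uniform lower bound $\gtrsim \mu(A)^{k_m} \geq \epsilon^{k_m} > 0$.

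The principal obstacle is the spherical distribution step: uniformity of $N_0$ across the induction levels and across the ergodic components of $\mu$ relies on the fact that the convergence is stated in $L^2(\mu)$ and that the above Cauchy--Schwarz argument converts it into a uniform bound on the integrand. The existence of the universal period $q = q(\epsilon, m, d)$---forcing integer spheres of squared radius in $q^2 \bN$ to equidistribute on the Kronecker factor of every $\bZ^d$-measure-preserving system with $d \geq 5$---is the nontrivial analytic input, and constitutes the main content of the preceding sections (and ultimately rests on Magyar--Stein--Wainger-type Fourier estimates for lattice points on spheres in dimension $\geq 5$).
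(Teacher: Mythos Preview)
Your broad outline---Furstenberg correspondence, then induction on the tree by peeling a leaf and replacing that leaf's contribution by a spherical average---matches the paper's architecture. But two of the steps you rely on do not hold as written.

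First, the convergence you invoke, $\mathcal{A}_{q^2\ell}\mathbf{1}_A \to \bE[\mathbf{1}_A\mid\mathcal{I}]$ in $L^2(\mu)$ for a universal $q$, is \emph{not} what the earlier sections establish, and is in fact false for general $A$. For any fixed $q$ there exist ergodic systems carrying a non-trivial character $\chi\in R_q$ whose Weyl sums $|S_N|^{-1}\sum_{w\in S_N}\chi(w)$ do not tend to $0$ (Theorem~2.1 gives no decay precisely when the frequency lies in $q_\eta^{-1}\bZ^d$); the $\chi$-component of $\mathbf{1}_A$ then survives the spherical averaging and obstructs convergence to the invariant part. What the paper actually proves is Theorem~4.1: \emph{provided} $B$ is $(q_\eta,\delta)$-equidistributed, the spherical averages of $\mathbf{1}_B$ are $L^2$-close (within $\sqrt{3\delta}+\eta$) to the constant $\mu(B)$. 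The device that manufactures this equidistribution hypothesis---and with it the integer $q=q(\epsilon,m,d)$---is the measure-increment Lemma~3.3, which passes to a $T^Q$-ergodic component (with $Q$ bounded in terms of $\epsilon,m,d$) on which $B$ becomes equidistributed. This step is absent from your plan, and without it there is no mechanism producing the universal $q$.

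Second, once one is in a single ergodic system with equidistribution, the paper runs the induction on the $L^2$ estimate
\[
\Bigl\|\,\bE_{\iota\in\mathcal{I}}\prod_{v\neq v_0}T^{\iota(v)}\mathbf{1}_B-\mu(B)^{m-1}\Bigr\|_2\leq(m-1)\epsilon
\]
(proof of Theorem~6.2), not on a correlation lower bound obtained by ergodic disintegration. Your disintegration step has exactly the uniformity problem you flag but do not resolve: the inductive hypothesis applied to the component $(X,\mu_\omega,T)$ supplies only an $N_0(\omega)$ depending on $\omega$, so one cannot assert $\mathcal{J}_{m-1}(\omega)\gtrsim\mu_\omega(A)^{k_{m-1}}$ for a single threshold valid $\nu$-almost everywhere, and hence cannot integrate the bound against $\mu_\omega(A)\,d\nu(\omega)$. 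Carrying the induction as a uniform $L^2$ bound inside one fixed (equidistributed, ergodic) system, as the paper does, sidesteps this entirely; the passage from immersions to embeddings is then a separate, elementary counting step at the end.
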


By considering the trees of diameter $2$ (i.e., the trees of the form $(V,E)$ where $V=\{v_0, \ldots, v_{m-1}\}$ and $E=\{ \{v_0,v_i\} \text{ } | \text{ } i=1, \ldots, m-1\}$) we recover a recent result of Lyall and Magyar on \textit{pinned distances} \cite{LyallMagyar}. We remark that their result for this particular family of trees is quantitatively superior to what we have stated as it turns out that, after restricting to this family of trees, the integer $q$ does not depend on $m$ (only on $\epsilon$ and $d$). See Section~\ref{sec: optimal recurrence} below for more details. This leads to the following unresolved question.

\begin{question} Does the positive integer $q$ in the conclusion of Theorem~\ref{thm: main comb thm intro} only depend on $\epsilon$ and $d$ (i.e., not on $m$) ? \end{question}

\subsection{Ergodic results}

The combinatorial results stated in the preceding section will be proven via an Ergodic theoretic approach. We will however aim to make the paper accessible to readers with no knowledge of Ergodic theory.

\begin{mydef}[Notational conventions and basic Ergodic theory] If $X$ is a set, then the statement \\ $T:\bZ^d \curvearrowright X$ indicates that there is a homomorphism $a \mapsto T^a$ from $\bZ^d$ to $\text{Aut}(X)$. In other words, $T$ is an action of $\bZ^d$ on $X$, where the action of $a \in \bZ^d$ on $x \in X$ is denoted by $T^a x$. Morever, given a positive integer $q \in \bZ^d$ we let $T^q:\bZ^d \curvearrowright X$ be the action given by $$(T^q)^a x = T^{qa} x \text{ for all } a\in \bZ^d \text{ and } x \in X.$$ If $X$ is a measurable space and $\mu$ is a probability measure on $X$ then we say that the action $T$ preserves $\mu$ if $$\mu(T^aB) =\mu(B) \text{ for all $a \in \bZ^d$ and (measurable}\footnote{From now on, we omit the adjective \textit{measurable} and simple take measurability to be an implicit assumption.}) \text{ }B \subset X.$$ We say that $T:\bZ^d \curvearrowright (X,\mu)$ is a measure preserving action if $(X,\mu)$ is a probability space and the action $T$ preserves $\mu$. Such a measure preserving action induces a unitary action on $L^2(X,\mu)$, which we shall also denote as $T:\bZ^d \curvearrowright (X,\mu)$, given by $$T^a f = f \circ T^a \text{ for } a \in \bZ^d \text{ and } f \in L^2(X,\mu).$$ We let $$L^2(X,\mu)^T = \{ f \in L^2(X,\mu) \text{ } | \text{ } T^a f = f \text{ for all } a \in \bZ^d \}$$ denote the space of $T$-invariant functions and we let $P_T:L^2(X,\mu) \to L^2(X,\mu)^T$ denote the orthogonal projection onto it. The action $T:\bZ^d \curvearrowright (X,\mu)$ is said to be \textit{ergodic} if $L^2(X,\mu)^T$ consists of only the constant functions, which is equivalent to the statement that there are no $B \subset X$, with $0<\mu(B)<1$, such that $T^aB=B$ for all $a \in \bZ^d$. 
\end{mydef}

We now state the standard Furstenberg correspondence princple which will allow us to reduce the combinatorial results stated above to Ergodic theoretic statements.

\begin{prop}[Furstenberg's correspondence principle, \cite{FurDiag}] Let $B \subset \bZ^d$. Then there exists a compact metric space $X$ together with an action $T:\bZ^d \curvearrowright X$ by homeomorphisms and a point $x_0 \in X$ and clopen $\widetilde{B} \subset X$ such that $$B = \{ b \in \bZ^d \text{ } | \text{ } T^bx_0 \in \widetilde{B} \}.$$ Moreover, there exists an ergodic $T$-invariant probability measure $\mu$ on $X$ such that $d^*(B)=\mu(\widetilde{B})$ and $$d^*\left( \bigcap_{a \in F} (a+ B)\right) \geq \mu \left(\bigcap_{a \in F} T^a \widetilde{B} \right) \text{ for all finite } F \subset \bZ^d.$$

\end{prop}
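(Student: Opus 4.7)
The plan is to use the standard symbolic construction. I would take $X = \{0,1\}^{\bZ^d}$ equipped with the product topology (a compact metric space) and let $T:\bZ^d \curvearrowright X$ act by the shift $(T^a y)(n) = y(n+a)$. Setting $x_0 = \mathbf{1}_B \in X$ and $\widetilde{B} = \{y \in X : y(0) = 1\}$, the latter is clopen, and a direct computation yields $T^b x_0 \in \widetilde{B}$ iff $x_0(b) = 1$ iff $b \in B$, giving the set-theoretic part of the claim.

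For the invariant measure, I would choose $L_k \to \infty$ and $c_k \in \bZ^d$ with $|B \cap (c_k + [0,L_k)^d)|/L_k^d \to d^*(B)$, form the empirical averages
$$\mu_k = \frac{1}{L_k^d} \sum_{a \in c_k + [0,L_k)^d} \delta_{T^a x_0},$$
and extract a weak-$*$ convergent subsequence $\mu_k \to \mu_0$. Since $[0,L_k)^d$ is a F{\o}lner sequence in $\bZ^d$, the standard Krylov--Bogolyubov argument gives $T$-invariance of $\mu_0$. For any clopen $E \subset X$, $\mathbf{1}_E$ is continuous, so $\mu_0(E) = \lim_k \mu_k(E)$. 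Applying this to $E = \widetilde{B}$ yields $\mu_0(\widetilde{B}) = d^*(B)$, and applying it to $E = \bigcap_{a \in F} T^a \widetilde{B}$ (using the observation $\{b : T^b x_0 \in E\} = \bigcap_{a \in F}(a+B)$) yields $\mu_0(E) \le d^*(\bigcap_{a \in F}(a+B))$.

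It remains to upgrade $\mu_0$ to an ergodic measure. I would apply the ergodic decomposition $\mu_0 = \int \mu_\omega \, d\nu(\omega)$ and argue that $\nu$-almost every $\mu_\omega$ works. Each $\mu_\omega$ is supported on the orbit closure $Y = \overline{\{T^a x_0 : a \in \bZ^d\}}$, and for any $y \in Y$ and any finite window $W \subset \bZ^d$, the pattern $y|_W$ equals $(T^{a} x_0)|_W$ for some $a = a(y,W)$; consequently $E_y \cap W = (B - a) \cap W$ where $E_y := \{n : y(n) = 1\}$. Fixing a generic point $y$ of an ergodic $\mu_\omega$ and applying the $\bZ^d$ pointwise ergodic theorem along cubes, one finds that $\mu_\omega(\bigcap_{a \in F} T^a \widetilde{B})$ equals the F{\o}lner density of $\bigcap_{a \in F}(a + E_y)$; a short bookkeeping step using the pattern-matching above shows this density is bounded by $d^*(\bigcap_{a \in F}(a+B))$. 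Specializing to $F = \{0\}$ gives $\mu_\omega(\widetilde{B}) \le d^*(B)$ for $\nu$-a.e.\ $\omega$, and integrating against $\nu$ recovers $\int \mu_\omega(\widetilde{B}) \, d\nu = \mu_0(\widetilde{B}) = d^*(B)$, which forces equality $\nu$-a.e. Selecting any such $\omega$ and setting $\mu = \mu_\omega$ completes the proof.

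The main obstacle is the final ergodic-decomposition step: ensuring that the intersection inequality, established only for the non-ergodic $\mu_0$, survives passage to a typical ergodic component $\mu_\omega$. The resolution is the pattern-matching observation, which (combined with the pointwise ergodic theorem) transfers upper-Banach-density bounds on subsets of $B$ to the measures $\mu_\omega$ supported on the orbit closure of $x_0$.
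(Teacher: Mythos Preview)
The paper does not supply its own proof of this proposition; it is quoted as a known result with a citation to \cite{FurDiag} and used as a black box. Your symbolic construction on $\{0,1\}^{\bZ^d}$, followed by Krylov--Bogolyubov along boxes realising $d^*(B)$ and an ergodic-decomposition upgrade via the pattern-matching argument on the orbit closure, is the standard route and is correct. The only point that needs care---and which you handle---is that the paper demands the \emph{exact} equality $\mu(\widetilde{B})=d^*(B)$ for an \emph{ergodic} $\mu$, not merely $\mu(\widetilde{B})\ge d^*(B)$: your pattern-matching gives $\mu_\omega(\widetilde{B})\le d^*(B)$ for $\nu$-a.e.\ $\omega$, while integrating recovers $d^*(B)$, forcing equality $\nu$-a.e.; since the countably many intersection inequalities (one per finite $F\subset\bZ^d$) also hold $\nu$-a.e., a single good $\omega$ exists.
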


Hence Magyar's theorem above (Theorem~\ref{Magyar original}) may be reduced to the following \textit{spherical recurrence} result, which we will prove in Section~\ref{sec: spherical recurrence}.

\begin{prop} \label{prop: ergodic magyar} Fix $\epsilon>0$ and an integer $d \geq 5$. Then there exists a positive integer $q=q(\epsilon,d)$ such that the following holds: For all measure preserving systems $T:\bZ^d \curvearrowright (X,\mu)$ and $B \subset X$ with $\mu(B)>\epsilon$, there exists $N_0$ such that for all $N \geq N_0$ there exists $a \in \{ v \in \bZ^d \text{ }: \text{ } \|v \|^2 =N \}$ such that $$\mu(B \cap T^{qa}B) > 0.$$ 

\end{prop}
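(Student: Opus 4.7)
The plan is to combine the spectral theorem for unitary representations of $\bZ^d$ with the Hardy--Littlewood circle-method analysis of the Fourier transforms of the discrete spheres $S_N := \{v \in \bZ^d : \|v\|^2 = N\}$ that underlies Magyar's original argument for Theorem~\ref{Magyar original}. First, the spectral theorem attaches to $1_B \in L^2(X,\mu)$ a positive Borel measure $\sigma$ on $\bT^d = \bR^d/\bZ^d$, of total mass $\|1_B\|_{L^2}^2 = \mu(B) > \epsilon$, such that
\begin{equation*}
\langle 1_B, T^a 1_B \rangle \;=\; \int_{\bT^d} e^{2\pi i a \cdot \xi}\, d\sigma(\xi) \qquad \text{for every } a \in \bZ^d.
\end{equation*}

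The strategy is to exhibit positivity of the normalised spherical averages
\begin{equation*}
I_N \;:=\; \frac{1}{|S_N|}\sum_{a \in S_N} \mu(B \cap T^{qa}B) \;=\; \int_{\bT^d} A_N(q\xi)\,d\sigma(\xi), \qquad A_N(\eta) := \frac{1}{|S_N|}\sum_{a \in S_N} e^{2\pi i a \cdot \eta}.
\end{equation*}
Once $I_N > 0$ is established, some summand must be strictly positive, producing the sought $a \in S_N$ with $\mu(B \cap T^{qa}B) > 0$. Hence the task reduces to a uniform positive lower bound on $I_N$ for all large $N$.

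The core of the argument is a circle-method asymptotic for $A_N$. For $d \geq 5$, Magyar's Kloosterman--Weil-based analysis decomposes $A_N(\eta)$ into a main term supported near rationals $a/r \in \bT^d$ of small denominator $r$ plus an error that is $o(1)$ uniformly in $\eta$; moreover the total contribution of the major arcs with denominator $> Q_0$ decays as $Q_0 \to \infty$, which is the step where the dimension bound $d \geq 5$ is essential. I would then choose $q = q(\epsilon, d)$ divisible by every prime power up to a threshold $Q_0 = Q_0(\epsilon, d)$, so that after the substitution $\eta \mapsto q\xi$ every surviving major-arc point is absorbed into the $q$-torsion subgroup $H_q := \{\xi \in \bT^d : q\xi \equiv 0 \bmod \bZ^d\}$. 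Combining these inputs yields
\begin{equation*}
\liminf_{N \to \infty} I_N \;\geq\; \sigma(H_q) \;-\; \tfrac{\epsilon^2}{2}.
\end{equation*}

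The proof then concludes by a soft spectral estimate: $H_q$ is precisely the joint eigenspace of $T^q$ at eigenvalue $1$, so $\sigma(H_q) = \|P_{T^q}1_B\|_{L^2(\mu)}^2$. Since the constant function $1$ lies in $L^2(X,\mu)^{T^q}$, orthogonal projection onto this subspace gives $\|P_{T^q}1_B\|^2 \geq |\langle 1_B, 1 \rangle|^2 = \mu(B)^2 > \epsilon^2$, so $I_N > \epsilon^2/4$ for all sufficiently large $N$, producing the recurrence. The principal obstacle I anticipate is the third step: arranging the circle-method estimates so that every non-negligible contribution localises on $H_q$, which both forces $d \geq 5$ (for summability of the Gauss-sum tails) and compels $q$ to be divisible by a large product of prime powers (to absorb every small-denominator rational into $H_q$).
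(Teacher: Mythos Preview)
Your spectral-measure setup and the reduction to showing $I_N>0$ are correct, as is the identity $\sigma(H_q)=\|P_{T^q}\mathds{1}_B\|_2^2\geq\mu(B)^2$. The gap is in the claim that choosing $q$ highly divisible ``absorbs every surviving major-arc point into $H_q$''. It does not: a major arc centred at a reduced fraction $a/r$ with $2\leq r\leq Q_0$ pulls back under $\theta=q\xi$ to a neighbourhood of $\xi=a/(qr)$; since then $q\xi=a/r\notin\bZ^d$, this point is never in $H_q$, no matter how divisible $q$ is. Concretely, Theorem~\ref{thm: exponential sums} only yields
\[
\liminf_{N\to\infty} I_N \;\geq\; \sigma(H_q)\;-\;\sigma\bigl(H_{qq_\eta}\setminus H_q\bigr)\;-\;\eta,
\]
and nothing in your outline bounds the middle term $\sigma(H_{qq_\eta}\setminus H_q)$, i.e.\ the spectral mass of $\mathds{1}_B$ at rational characters of order dividing $qq_\eta$ but not $q$.

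The paper handles exactly this obstruction by a \emph{measure-increment} argument (Lemma~\ref{lemma: measure increment}): one passes to a $T^Q$-ergodic component on which $B$ is $(q_\eta,\delta)$-equidistributed, and Proposition~\ref{prop: equidist implies small proj} then forces the offending rational spectral mass to be small. This feeds into the Spherical Mean Ergodic Theorem (Theorem~\ref{spherical mean ergodic thm}), after which Cauchy--Schwarz finishes as in Proposition~\ref{prop: spherical recurrence}. Your outline could be repaired by inserting an analogous increment at the level of $\sigma$ (iterate $q\mapsto qq_\eta$ until $\sigma(H_{qq_\eta})-\sigma(H_q)$ is small; this terminates in boundedly many steps since $\sigma(\bT^d)\leq 1$), but as written this crucial mechanism is missing.
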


More generally, we will establish the following recurrence result which implies our main combinatorial result about locally isometrically embedded trees in positive density subsets (Theorem~\ref{thm: main comb thm intro}).

\begin{thm}\label{thm: intro tree recurrence}  Fix $\epsilon>0$ and integers $m \geq 2$ and $d \geq 5$. Then there exists a positive integer $q=q(\epsilon,m,d)$ such that the following holds: Suppose that $T:\bZ^d \curvearrowright (X,\mu)$ is a measure preserving action and $B \subset X$ with $\mu(B)>\epsilon$. Then there exists $N_1$ such that for all edge-labelled trees $\tau=(V,E,\phi,\bZ_{>N_1})$, with $|V|=m$, there exists a locally isometric embedding $\iota$ of $\tau$ into $\bZ^d$ such that $$\mu\left( \bigcap_{v \in V} T^{q \cdot \iota(v)}B \right) >0. $$

\end{thm}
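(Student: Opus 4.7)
The plan is to proceed by induction on the number of vertices $m$, strengthening the inductive statement so that, in addition to the existence of a locally isometric embedding $\iota$ of $\tau$, one obtains a quantitative lower bound $\mu\bigl(\bigcap_{v \in V} T^{q\iota(v)}B\bigr) \geq \delta(\epsilon, m, d) > 0$ depending only on $\epsilon$, $m$, and $d$. Such a uniform lower bound on the intersection measure is essential in order to apply spherical recurrence to the intersection set when adjoining a new vertex at the next step of the induction.

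The base case $m = 2$ requires strictly more than Proposition~\ref{prop: ergodic magyar}: one needs the \emph{optimal spherical distribution} statement that, for a suitably chosen $q = q(\epsilon, d)$ and all sufficiently large $N$ in the appropriate arithmetic progression modulo $q$,
\[
  \mathrm{Avg}_{\substack{a \in \bZ^d \\ \|a\|^2 = N}} \mu(B \cap T^{qa}B) \;\geq\; \tfrac{1}{2}\mu(B)^2,
\]
whence by averaging some $a$ realises $\mu(B \cap T^{qa}B) \geq \tfrac{1}{2}\mu(B)^2$. I expect to derive this from the statement that the spherical averaging operators $S_N^q f := \mathrm{Avg}_{\|a\|^2 = N} T^{qa} f$ converge on $L^2(X,\mu)$, along the right progression, to the orthogonal projection $P$ onto some closed subspace containing the constants, so that $\langle S_N^q 1_B, 1_B \rangle \to \|P 1_B\|^2 \geq \mu(B)^2$. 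This is the ergodic analogue of the Lyall--Magyar pinned-distance theorem and is the main obstacle: establishing it needs Fourier-analytic input on the distribution of lattice points on spheres in $\bZ^d$ for $d \geq 5$, together with a careful choice of $q$ to kill the local (mod $q$) obstructions that prevent equidistribution on generic spheres.

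For the inductive step, fix a leaf $v_0$ of $\tau$ with unique neighbour $v_1$ and let $\tau'$ denote the $(m-1)$-vertex sub-tree on $V \setminus \{v_0\}$. Once the edge-labels of $\tau'$ exceed the threshold $N_1^{(m-1)}$ supplied by the strengthened inductive hypothesis, we obtain an embedding $\iota'$ of $\tau'$ for which $A := \bigcap_{v \neq v_0} T^{q\iota'(v)}B$ satisfies $\mu(A) \geq \delta(\epsilon, m-1, d)$. Since $A \subseteq T^{q\iota'(v_1)}B$, the translate $A' := T^{-q\iota'(v_1)}A$ lies inside $B$ and has the same measure. Applying the quantitative base case to $A'$ yields, for every sufficiently large value of $\phi(\{v_0, v_1\})$, a vector $a_0 \in \bZ^d$ with $\|a_0\|^2 = \phi(\{v_0,v_1\})$ and $\mu(A' \cap T^{qa_0} A') \geq \tfrac{1}{2}\mu(A')^2$; because $A' \subseteq B$ this shifts back to $\mu\bigl(A \cap T^{q(\iota'(v_1) + a_0)}B\bigr) \geq \tfrac{1}{2}\delta(\epsilon, m-1, d)^2$. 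Setting $\iota(v_0) := \iota'(v_1) + a_0$ extends $\iota'$ to a locally isometric embedding of $\tau$ with intersection measure at least $\delta(\epsilon, m, d) := \tfrac{1}{2}\delta(\epsilon, m-1, d)^2$, closing the induction. The period $q$ is chosen in advance, large enough to serve the quantitative recurrence at every density $\delta(\epsilon, k, d)$ for $k \leq m$, and the threshold $N_1$ is then the maximum of the finitely many thresholds produced across the induction.
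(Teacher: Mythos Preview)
Your inductive scheme has a genuine gap in how the threshold $N_1$ is assembled. In the inductive step you apply the quantitative base case to the set $A' = T^{-q\iota'(v_1)}A$, and the threshold for which the spherical recurrence $\mu(A' \cap T^{qa_0}A') \geq \tfrac{1}{2}\mu(A')^2$ holds depends on the \emph{set} $A'$, not merely on its measure $\mu(A') \geq \delta(\epsilon, m-1, d)$. (In Proposition~\ref{prop: ergodic magyar} and Proposition~\ref{prop: spherical recurrence} the threshold comes from the rate at which the spherical averages of the specific indicator function settle down, via Lemma~\ref{lemma: vanishing mean} and Theorem~\ref{spherical mean ergodic thm}; there is no uniformity over all sets of a given measure.) But $A'$ depends on the embedding $\iota'$, which in turn depends on the edge labels of $\tau'$. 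Hence the admissible range for the new label $\phi(\{v_0,v_1\})$ depends on the labels already placed on $\tau'$, and you cannot extract a single $N_1 = N_1(B,m)$ valid for all trees simultaneously. Your final sentence (``$N_1$ is the maximum of the finitely many thresholds produced across the induction'') overlooks that there is one such threshold per choice of labels on $\tau'$, hence uncountably many.

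The paper avoids this by never applying spherical recurrence to the intersection set. After a measure-increment reduction (Lemma~\ref{lemma: measure increment}) to the $(q_\eta,\delta)$-equidistributed case, it proves the $L^2$ estimate
\[
\Bigl\| \bE_{\iota \in \cI(\tau)} \prod_{v \neq v_0} T^{\iota(v)}\mathds{1}_B - \mu(B)^{m-1} \Bigr\|_2 \leq (m-1)\epsilon
\]
by the same leaf-deletion recursion you propose, but with the crucial difference that at each step the spherical average is taken of $\mathds{1}_B$ itself, while the remaining factors $\prod T^{\iota(v)}\mathds{1}_B$ are simply bounded by $1$ in $L^\infty$. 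Thus the single threshold $N_0$ coming from Theorem~\ref{spherical mean ergodic thm} applied once to $B$ serves for every edge and every tree. A minor secondary point: your extension $\iota(v_0) := \iota'(v_1) + a_0$ need not be distinct from the other $\iota'(v)$; this is easily repaired (only $m-1$ bad values of $a_0$ among $|S_N|$), but should be said.
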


We now finish with a \textit{pointwise recurence} result, whose combinatorial consequence is an \textit{optimal spherical distribution result} recently obtained by Lyall and Magyar (Theorem $1$ in \cite{LyallMagyar}). Let $S_N=\{ v \in \bZ^d \text{ }: \text{ } \|v \|^2=N \}$. We shall use the shorthand ``$P(b)$ holds for $\mu$-many $b \in B$'' to mean $\mu(\{ b \in B \text{ } | \text{ } P(b) \})>0$.

\begin{thm}\label{thm: intro pointwise recurrence}  Fix $\epsilon>0$ and an integer $d \geq 5$. Then there exists a positive integer $q=q(\epsilon,d)$ such that the following holds: Let $T: \bZ^d \curvearrowright (X,\mu)$ be an ergodic measure preserving action and $B \subset X$, with $\mu(B)> \epsilon$. Then there exists $N_0$ such that for all $N \geq N_0$ we have that $$\frac{1}{|S_N|} \sum_{a \in S_N} \mathds{1}_{B}(T^{qa}b) > \mu(B) - \epsilon \quad{ } \text{ for $\mu$-many} \text{ }  b \in B.$$ 

\textbf{Acknowledgement:} The author is grateful to Alex Iosevich, who posed a question on chains in sets of positive density, which led to Proposition~\ref{prop: chains}.

\end{thm}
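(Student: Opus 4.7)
I would analyse the twisted spherical averages
\[
\sigma_N^q f(x) := \frac{1}{|S_N|}\sum_{a \in S_N} f(T^{qa}x)
\]
and aim, for an appropriately chosen $q = q(\epsilon,d)$, to establish the lower bound $\sigma_N^q \mathds{1}_B(b) > \mu(B) - \epsilon$ on a set of positive $\mu$-measure inside $B$, for all $N \geq N_0$. The proof proceeds in three stages: a soft reverse-Markov reduction, a spectral reformulation, and a uniform Fourier damping estimate for the spherical transform that forms the analytic heart of the argument.

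\textbf{Reduction to a mean estimate.} Setting $A_N := \{ b \in B : \sigma_N^q \mathds{1}_B(b) \leq \mu(B) - \epsilon \}$ and using $0 \leq \sigma_N^q \mathds{1}_B \leq 1$, one computes
\[
\int_B \sigma_N^q \mathds{1}_B\, d\mu \;\leq\; \mu(A_N)(\mu(B)-\epsilon) + (\mu(B)-\mu(A_N)) \;=\; \mu(B) - \mu(A_N)\bigl(1-\mu(B)+\epsilon\bigr).
\]
Hence it suffices to prove the mean bound
\[
\int_X \mathds{1}_B \cdot \sigma_N^q \mathds{1}_B\, d\mu \;\geq\; \mu(B)^2 - \tfrac{\epsilon}{2}\mu(B), \qquad N \geq N_0,
\]
which forces $\mu(A_N) < \mu(B)$ and therefore $\mu(B \setminus A_N) > 0$, as required.

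\textbf{Spectral expansion.} Applying the spectral theorem to the unitary $\bZ^d$-action on $L^2(X,\mu)$, to $f := \mathds{1}_B$ one associates a positive finite Borel measure $\sigma_f$ on the dual group $\widehat{\bZ^d} \cong \bT^d$ of total mass $\|f\|_2^2 = \mu(B)$ satisfying $\langle T^a f, f \rangle = \int_{\bT^d} e^{2\pi i a \cdot \alpha}\, d\sigma_f(\alpha)$. Ergodicity of $T$ yields $\sigma_f(\{0\}) = \|P_T f\|_2^2 = \mu(B)^2$. Since $S_N = -S_N$, the Fourier transform $\widehat{\sigma_{S_N}}(\beta) := \frac{1}{|S_N|}\sum_{a \in S_N} e^{2\pi i a \cdot \beta}$ is real and averaging gives
\[
\int_X \mathds{1}_B \cdot \sigma_N^q \mathds{1}_B\, d\mu \;=\; \int_{\bT^d} \widehat{\sigma_{S_N}}(q\alpha)\, d\sigma_f(\alpha).
\]
The atom at $\alpha = 0$ already contributes the main term $\mu(B)^2$, so the task reduces to bounding the contribution from $\bT^d \setminus \{0\}$ below by $-\tfrac{\epsilon}{2}\mu(B)$.

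\textbf{Fourier damping and main obstacle.} The crux is to produce $q = q(\epsilon,d)$ and $N_0$ so that the uniform Fourier bound
\[
\widehat{\sigma_{S_N}}(q\alpha) \;\geq\; -\tfrac{\epsilon}{2} \qquad \text{for every } \alpha \in \bT^d \text{ and } N \geq N_0
\]
holds; integrating this bound against $\sigma_f$ restricted to $\bT^d \setminus \{0\}$ and combining with the spectral expansion delivers the mean estimate. To obtain this Fourier bound one invokes, for $d \geq 5$, Magyar's circle-method decomposition of $\widehat{\sigma_{S_N}}$ into a convergent singular series concentrated on rationals of bounded denominator (with Gauss-sum coefficients of magnitude decaying with the denominator) plus a minor-arc error tending to zero with $N$; then, following the optimal-distribution arithmetic of Lyall and Magyar, one selects $q$ as a product of sufficiently small primes to sufficiently high powers so that the dilation $\alpha \mapsto q\alpha$ pushes every non-trivial rational Gauss-sum main term either back onto the atom at $0$ (where it contributes $+1$) or onto frequencies of large enough denominator to be suppressed by $\tfrac{\epsilon}{2}$. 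The main obstacle is precisely exhibiting such a $q$ uniformly in the action $T$ and the set $B$; the hypothesis $d \geq 5$ is essential here, as it is what guarantees absolute convergence of the singular series and adequate decay of the minor-arc error.
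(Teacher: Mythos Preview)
Your first two steps---the reverse-Markov reduction to the correlation bound and the spectral expansion---are correct and close in spirit to the paper's route (which instead bounds the $L^2$ deviation $\|\frac{1}{|S_N|}\sum_{a\in S_N}T^a\mathds{1}_B-\mu(B)\|_2$ and applies Chebyshev). The genuine gap is the third step. The uniform lower bound $\widehat{\sigma_{S_N}}(q\alpha)\geq -\tfrac{\epsilon}{2}$ for all $\alpha\in\bT^d$ cannot be arranged by \emph{any} choice of $q$: the dilation $\alpha\mapsto q\alpha$ is a surjection of $\bT^d$ onto itself, so the range of $\alpha\mapsto\widehat{\sigma_{S_N}}(q\alpha)$ is exactly the range of $\widehat{\sigma_{S_N}}$ and is entirely independent of $q$. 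And that range does dip below any fixed negative threshold for all large $N$: already on the major arc at the origin the Magyar--Stein--Wainger approximation gives $\widehat{\sigma_{S_N}}(\beta)\approx\widehat{d\sigma_{S^{d-1}}}(\sqrt{N}\,\beta)$, and the continuous spherical transform (a Bessel function) has negative lobes of fixed size. Your heuristic that dilation by $q$ ``pushes bad rationals to $0$ or to large denominators'' is inverted: for every bad frequency $\beta$ there is a preimage $\alpha$ with $q\alpha=\beta$, so nothing is gained on the multiplier side.

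The paper's remedy is to control the \emph{spectral measure} $\sigma_f$ rather than the multiplier $\widehat{\sigma_{S_N}}$. A measure-increment argument (Lemma~\ref{lemma: measure increment}) passes to a $T^Q$-ergodic component $\nu$, with $Q$ a bounded power of $q_\eta$, on which $B$ is $(q_\eta,\delta)$-equidistributed; Proposition~\ref{prop: equidist implies small proj} then forces the projection of $\mathds{1}_B$ onto the non-trivial eigenspaces in $R^*_{q_\eta}$ to have $L^2(\nu)$-norm $O(\sqrt{\delta})$. With the dangerous rational spectrum thus suppressed on the measure side, the Spherical Mean Ergodic Theorem (Theorem~\ref{spherical mean ergodic thm}) yields $\|\frac{1}{|S_N|}\sum_{a\in S_N}(T^Q)^a\mathds{1}_B-\nu(B)\|_{L^2(\nu)}<\epsilon^2$ for large $N$, and a one-line Chebyshev bound gives the pointwise conclusion. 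The integer $q$ in the theorem is this $Q$; its dependence only on $\epsilon$ and $d$ comes from the a priori bound on $Q$ in the increment lemma, not from any uniform damping of $\widehat{\sigma_{S_N}}$. In short, the Lyall--Magyar arithmetic you allude to is implemented as a density (here, measure) increment that tames $\sigma_f$, not as a choice of $q$ that tames the spherical transform.
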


\section{Main tool from discrete harmonic analysis}

We now state the main blackbox that we will use in our proofs, which is also used in Magyar's original proof \cite{Magyar} as well as the recent optimal improvements and pinned generalizations of Magyar and Lyall in \cite{LyallMagyar}. Fix an integer $d \geq 5$. For $N \in \bN$, we define the discrete sphere $$S_{N} = \{ x \in \bZ^d \text{ }| \text{ } \|x \|_2^2 = N \}.$$

For $\eta>0$ and $C>0$ let $$q_{\eta,C} = \text{lcm} \{ q \in \bZ \text{ } | \text{ } 1 \leq q \leq C\eta^{-2}  \}$$ 

Magyar-Stein-Wainger gave approximations for exponential sums on a discrete sphere \cite{MSW}. The following is a rather straightforward consequence of these approximations proved in Magyar's original paper \cite{Magyar} (this particular formulation is stated in a recent work of Lyall and Magyar \cite{LyallMagyar}).

\begin{thm}[Exponential sum estimates on discrete spheres]\label{thm: exponential sums} There exists a constant $C=C_d>0$ (depends only on $d \geq 5$) such that the following is true: Given  $\eta>0$, an integer $N \geq C\eta^{-4}$ and $$ \theta \in \bR^d \setminus \left( q_{\eta,C}^{-1} \bZ^d + [-(\eta N)^{-1/2}, (\eta N)^{-1/2} ] \right), $$ then $$\left| \frac{1}{|S_{N}|} \sum_{x \in S_{N}} \exp(2\pi i \langle x , \theta \rangle \right| \leq \eta.$$ 

\end{thm}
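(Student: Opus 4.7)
The plan is to deduce the estimate from the Magyar-Stein-Wainger (MSW) circle-method approximation to the normalized spherical exponential sum $\widehat{\sigma_N}(\theta) := |S_N|^{-1}\sum_{x \in S_N} e^{2\pi i \langle x, \theta\rangle}$. For $d \geq 5$, MSW establish an expansion of the schematic form
$$\widehat{\sigma_N}(\theta) \;=\; \sum_{q=1}^\infty \sum_{\substack{a \in (\bZ/q)^d \\ \gcd(a_1,\dots,a_d,q)=1}} G(q,a;N)\;\widehat{d\sigma_1}\bigl(\sqrt{N}(\theta - a/q)\bigr) \;+\; E_N(\theta),$$
where $G(q,a;N)$ are normalized Gauss sums, $d\sigma_1$ is surface measure on the unit sphere $\mathbb{S}^{d-1} \subset \bR^d$, and the error satisfies $\|E_N\|_\infty \lesssim N^{-\delta(d)}$ for some $\delta(d) > 0$. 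The three quantitative inputs I will use are: (i) the stationary-phase decay $|\widehat{d\sigma_1}(y)| \lesssim (1+|y|)^{-(d-1)/2}$; (ii) the absolute summability of $\sum_{q,a}|G(q,a;N)\,\widehat{d\sigma_1}(\sqrt{N}(\theta-a/q))|$ uniformly in $\theta$, which holds for $d \geq 5$ thanks to the Weil-type bound $|G(q,a;N)| \lesssim q^{-d/2}$ combined with the decay in (i); and (iii) the error bound on $E_N$.

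The proof then amounts to a three-way split. First, set $Q := C\eta^{-2}$ and observe that every rational $a/q$ with $q \leq Q$ lies in $q_{\eta,C}^{-1}\bZ^d$ (since $q \mid q_{\eta,C}$), so by hypothesis on $\theta$ we have $|\theta - a/q|_2 \geq c(\eta N)^{-1/2}$; applying (i) gives $|\widehat{d\sigma_1}(\sqrt{N}(\theta-a/q))| \lesssim \eta^{(d-1)/4}$ for each such pair $(q,a)$. Summed against the Gauss-sum weights using (ii), the total low-$q$ contribution is $O(\eta^{(d-1)/4}) \leq \eta/3$ once $d \geq 5$ and $C_d$ is chosen large enough. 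Second, the tail $q > Q$ is bounded by the sub-sum of the series in (ii) beyond the cutoff $Q$, which tends to $0$ as $Q \to \infty$ and hence is $\leq \eta/3$ for $C_d$ sufficiently large. Third, the error $|E_N(\theta)| \leq \eta/3$ whenever $N \geq C\eta^{-4}$, again absorbing constants into $C_d$.

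The main obstacle is purely bookkeeping: one must verify that the three scales $(\eta N)^{-1/2}$, $Q = C\eta^{-2}$ and $N \geq C\eta^{-4}$ are chosen precisely so as to make each of the three pieces small simultaneously. The match between the ``minor-arc radius'' $(\eta N)^{-1/2}$ and the ``denominator cutoff'' $C\eta^{-2}$ is dictated by the stationary-phase exponent $(d-1)/2$ in (i); the match between $N \geq C\eta^{-4}$ and the required error $\eta$ is dictated by $\delta(d)$ in the MSW error bound. Since the paper advertises this as ``a rather straightforward consequence'' of MSW, no new harmonic-analytic ingredient is needed beyond what is already in \cite{MSW} and \cite{Magyar}: the proof is a parameter optimization inside their circle-method framework.
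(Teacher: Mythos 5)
The paper gives no proof of this theorem: it is quoted as a blackbox from Lyall--Magyar (and ultimately from Magyar's original paper), where it is derived from the Magyar--Stein--Wainger approximation formula by exactly the major-arc/minor-arc/error split you describe, with the cutoff $C\eta^{-2}$ matched to the stationary-phase decay of the spherical measure and $N \geq C\eta^{-4}$ matched to the MSW error exponent. Your outline is therefore the correct and standard route; it remains a sketch (the uniform summability in your step (ii) and the precise exponents are deferred to \cite{MSW} and \cite{Magyar}), but that is the same level of detail the paper itself provides.
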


We stress that this consequence of \cite{MSW} has less than a one page proof in \cite{Magyar} and thus only takes up a small portion of that paper. As such, using this as a blackbox does not detract much from our alternative proof of Theorem~\ref{Magyar original} given in Section \ref{sec: spherical recurrence}.

\textbf{Important Notational Convention:} For brevity, let us now write $q_{\eta}=q_{\eta,C}$ where $C=C_d$ is as above (the dimension $d \geq 5$ will from now on be fixed).

\section{$(q,\delta)$-Equidistributed sets.}

In this section, we introduce the notion of a $(q,\delta)$-equidistributed subset of an Ergodic system, which may be of independent interest. This will enable us to employ a \textit{measure increment} argument from which we will obtain good control of the integer $q=q(\epsilon,m,d)$ appearing in Theorems~\ref{thm: main comb thm intro} and \ref{thm: intro tree recurrence} above. We briefly remark that a combinatorial analogue of such an increment argument (called the \textit{density increment} argument) is often used in Additive Combinatorics. In fact, it is used in Magyar's original proof of Theorem~\ref{Magyar original} as well as in the recent work of Lyall and Magyar \cite{LyallMagyar}. However, the details are slightly more technical in our Ergodic setting.

For the remainder of this section, let $F_N=[1,N]^d \cap \bZ^d$.

\begin{mydef} \label{def: equidistributed} Let $T: \bZ^d \curvearrowright (X,\mu)$ be an ergodic measure preserving action. Then we say that $B \subset X$ is \textit{$(q,\delta)$-equidistributed} if for almost all $x \in X$ we have $$ \lim_{N \to \infty} \frac{1}{|F_N|}\left | \{a \in F_N \text{ } | \text{ } T^{qa}x \in B \}\right| \leq (1+\delta)\mu(B). $$

\end{mydef}

\begin{mydef}[Conditional probability and ergodic components] \label{def: ergodic components of finite index} If $(X,\mu)$ is a probability space and $C \subset X$ is measurable with $\mu(C)>0$ then we define the conditional probability measure $\mu(\cdot | C)$ given by $\mu(B|C)=\frac{\mu(B \cap C)}{\mu(C)}$. We note that if $C$ is invariant under some measure preserving action, then $\mu(\cdot |C)$ is also preserved by this action. If $T: \bZ^d \curvearrowright (X,\mu)$ is ergodic and $Q$ is a positive integer, then the action $T^Q: \bZ^d \curvearrowright(X,\mu)$ may not be ergodic; but it is easy to see that there exists a $T^Q$-invariant subset $C \subset X$ such that the action of $T^Q$ on $C$ is ergodic (more precisely, $\mu( \cdot | C)$ is $T^Q$-ergodic) and the translates of $C$ disjointly cover $X$ (there are at most $Q^d$ distinct translates, hence $\mu(C) \geq Q^{-d}$). Note that the translates of $C$ also satisfy these properties of $C$. We call such a measure $\mu( \cdot | C)$ a $T^Q$-ergodic component of $\mu$. It follows that $\mu$ is the average of its distinct $T^Q$-ergodic components.

\end{mydef}

We may now introduce our measure increment technique, which will be used to reduce our recurrence theorems, such as Theorem~\ref{thm: intro tree recurrence}, to ones which assume sufficient equidistribution.

\begin{lemma}[Ergodic measure increment argument]\label{lemma: measure increment} Let $\delta,\epsilon>0$, let $q$ be a positive integer and let $T: \bZ^d \curvearrowright (X,\mu)$ be ergodic. If $B \subset X$ with $\mu(B)>\epsilon$ then there exists a positive integer $Q  \leq q^{\log(\epsilon^{-1})/\log(1+\delta)}$ and a $T^Q$-ergodic component, say $\nu$, of $\mu$ such that $\nu(B) \geq \mu(B)$ and $B$ is $(q,\delta)$-equidistributed with respect to $T^Q:\bZ^d \curvearrowright (X,\nu)$. 

\end{lemma}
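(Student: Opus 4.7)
The plan is an iterative ``measure increment'' procedure producing a sequence $(Q_k,\nu_k)$ with $\nu_k$ a $T^{Q_k}$-ergodic component of $\mu$ and $\nu_k(B)\geq (1+\delta)^k \mu(B)$. Start with $Q_0=1$ and $\nu_0=\mu$ (which is $T^{Q_0}$-ergodic by hypothesis). At stage $k$, if $B$ is $(q,\delta)$-equidistributed with respect to $T^{Q_k}:\bZ^d \curvearrowright (X,\nu_k)$, stop and output $(Q,\nu):=(Q_k,\nu_k)$. Otherwise, refine as described below.

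For the refinement, apply the pointwise ergodic theorem for $\bZ^d$-actions along the boxes $F_N=[1,N]^d\cap \bZ^d$ to the (possibly non-ergodic) action $T^{qQ_k}:\bZ^d \curvearrowright (X,\nu_k)$: for $\nu_k$-a.e.\ $x$,
$$\lim_{N\to\infty}\frac{1}{|F_N|}\sum_{a\in F_N}\mathds{1}_B(T^{qQ_k a}x) \;=\; E_{\nu_k}\!\bigl[\mathds{1}_B \,\big|\, \mathcal{I}_{T^{qQ_k}}\bigr](x),$$
where $\mathcal{I}_{T^{qQ_k}}$ is the $\sigma$-algebra of $T^{qQ_k}$-invariant subsets of $(X,\nu_k)$. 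Failure of $(q,\delta)$-equidistribution exhibits a $T^{qQ_k}$-invariant set $A\subset C_k$ (writing $\nu_k=\mu(\cdot\,|\,C_k)$) of positive $\nu_k$-measure on which this conditional expectation exceeds $(1+\delta)\nu_k(B)$, so averaging gives $\mu(B\cap A)/\mu(A) \geq (1+\delta)\nu_k(B)$. Decomposing $A$ into the finitely many $T^{qQ_k}$-ergodic subcomponents of $(X,\nu_k)$ and selecting one, $C_{k+1}\subset A$, attaining at least the average, we obtain $\mu(B\cap C_{k+1})/\mu(C_{k+1})\geq(1+\delta)\nu_k(B)$. Setting $Q_{k+1}:=qQ_k$ and $\nu_{k+1}:=\mu(\cdot\,|\,C_{k+1})$, the set $C_{k+1}$ is $T^{Q_{k+1}}$-invariant with $T^{Q_{k+1}}$ acting ergodically on $(X,\nu_{k+1})$, so $\nu_{k+1}$ qualifies as a $T^{Q_{k+1}}$-ergodic component of $\mu$ in the sense of Definition~\ref{def: ergodic components of finite index}, and $\nu_{k+1}(B)\geq(1+\delta)\nu_k(B)$, closing the induction.

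Since $\nu_k(B)\leq 1$ while $\nu_k(B)\geq (1+\delta)^k\mu(B) > (1+\delta)^k\epsilon$, the iteration must terminate at some step $k_0 \leq \log(\epsilon^{-1})/\log(1+\delta)$. At termination one has $Q=q^{k_0}\leq q^{\log(\epsilon^{-1})/\log(1+\delta)}$, $\nu(B)\geq\mu(B)$, and the requisite $(q,\delta)$-equidistribution, completing the proof.

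The only point requiring real care is closing the loop: verifying that $\nu_{k+1}$, built by conditioning $\mu$ on a $T^{qQ_k}$-invariant subset of the $T^{Q_k}$-ergodic component $C_k$, is itself a $T^{Q_{k+1}}$-ergodic component of $\mu$ as specified in Definition~\ref{def: ergodic components of finite index}. This is automatic because any $T^{Q_k}$-invariant set is $T^{qQ_k}$-invariant, so the $T^{qQ_k}$-ergodic decomposition of $(X,\nu_k)$ simply refines the $T^{Q_k}$-ergodic decomposition of $(X,\mu)$, and the translate/cover property needed in Definition~\ref{def: ergodic components of finite index} for $C_{k+1}$ is inherited from the $T$-ergodicity of $\mu$. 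Everything else in the argument is clean bookkeeping.
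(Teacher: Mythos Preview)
Your proof is correct and follows essentially the same iterative measure-increment scheme as the paper: start from $(Q_0,\nu_0)=(1,\mu)$, and whenever $(q,\delta)$-equidistribution fails for $T^{Q_k}\curvearrowright(X,\nu_k)$, use the pointwise ergodic theorem for $T^{qQ_k}$ to pass to a $T^{qQ_k}$-ergodic component with $B$-measure at least $(1+\delta)\nu_k(B)$, terminating after at most $\log(\epsilon^{-1})/\log(1+\delta)$ steps. The paper's version is terser, but your extra paragraph verifying that each $\nu_{k+1}$ is a genuine $T^{Q_{k+1}}$-ergodic component of $\mu$ (not merely of $\nu_k$) fills in a point the paper leaves implicit.
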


To study the limits appearing in Definition~\ref{def: equidistributed} we make use of the well known Pointwise Ergodic Theorem.

\begin{prop}[Pointwise ergodic theorem] Let $T: \bZ^d \curvearrowright (X,\mu)$ be a measure preserving action. Then for all $f \in L^2(X,\mu)$ there exists $X_f \subset X$ with $\mu(X_f)=1$ such that $$ \lim_{N \to \infty} \frac{1}{|F_N|} \sum_{a \in F_N} f(T^a x) \to P_T f (x) $$ for all  $x \in X_f$.

\end{prop}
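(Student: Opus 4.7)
The plan is to prove this classical Wiener pointwise ergodic theorem for $\bZ^d$-actions by the standard two-step scheme: establish a maximal inequality, identify the pointwise limit on a dense subspace of $L^2(X,\mu)$, and then combine these via an approximation argument.

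First, I would prove a Hardy--Littlewood-type maximal inequality for the averages $A_Nf(x):=|F_N|^{-1}\sum_{a\in F_N}f(T^ax)$. Writing $Mf(x):=\sup_{N\geq 1}A_N|f|(x)$, the goal is a bound $\|Mf\|_{L^2(X,\mu)}\leq C_d\|f\|_{L^2(X,\mu)}$. The cleanest route is Calder\'on transference: the corresponding maximal operator on $\bZ^d$, given by convolution with the normalized indicators of the cubes $F_N$, is dominated by the discrete Hardy--Littlewood maximal operator on $\bZ^d$, which satisfies a weak-type $(1,1)$ inequality by Wiener's covering lemma, and is therefore $L^2$-bounded by interpolation with the trivial $L^\infty$ bound. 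Applying this $\ell^2(\bZ^d)$ bound orbit-by-orbit to the function $n\mapsto f(T^n x)$ on a large cube and integrating against $\mu$ (using that $T$ preserves $\mu$) produces the desired inequality on $X$.

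Second, I would identify $\lim_N A_Nf$ on a dense subspace. Orthogonally decompose $L^2(X,\mu)=L^2(X,\mu)^T\oplus H_0$, where $H_0:=(L^2(X,\mu)^T)^{\perp}$. For $f\in L^2(X,\mu)^T$ the averages are constant in $N$ and equal $P_Tf=f$. For $H_0$, note that the linear span of coboundaries $g-T^{e_i}g$, with $g\in L^\infty(X,\mu)$ and $e_1,\ldots,e_d$ the standard basis vectors, is dense in $H_0$: any $h\in L^2(X,\mu)$ orthogonal to all such coboundaries satisfies $T^{e_i}h=h$ for each $i$ and is therefore $T$-invariant, i.e.\ $h=0$ in $H_0$. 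For a coboundary $f=g-T^{e_i}g$ with $g$ bounded, the sum $\sum_{a\in F_N}\bigl(g(T^ax)-g(T^{a+e_i}x)\bigr)$ telescopes in the $e_i$-direction and is bounded by $2\|g\|_\infty N^{d-1}$, so $A_Nf(x)\to 0=P_Tf(x)$ uniformly.

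Finally, I would combine these via a standard three-$\epsilon$ argument: given $f\in L^2$ and $\epsilon>0$, write $f-P_Tf\in H_0$ and approximate it in $L^2$-norm within $\epsilon$ by a finite sum $g_\epsilon$ of $L^\infty$-coboundaries. Then $\limsup_N|A_Nf-P_Tf|\leq M(f-P_Tf-g_\epsilon)$ pointwise, so by Chebyshev and the maximal inequality the set where $\limsup_N|A_Nf(x)-P_Tf(x)|>\eta$ has $\mu$-measure at most $C_d^2\eta^{-2}\epsilon^2$; letting $\epsilon\to 0$ and then taking a countable intersection over $\eta\to 0$ yields the required full-measure set $X_f$. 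The main technical step is the maximal inequality, but for the cubes $F_N$ in $\bZ^d$ this is classical and transfers painlessly to the measure-preserving setting; all other steps are either trivial telescoping or soft Hilbert space approximation.
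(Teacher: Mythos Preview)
Your outline is a correct and standard proof of the Wiener pointwise ergodic theorem for $\bZ^d$-actions: maximal inequality via Calder\'on transference, pointwise convergence on the dense subspace of invariants plus $L^\infty$-coboundaries, and a three-$\epsilon$ approximation. However, there is nothing to compare against here: the paper does not prove this proposition at all. It is stated as ``the well known Pointwise Ergodic Theorem'' and used as a blackbox in the proof of Lemma~\ref{lemma: measure increment} and Proposition~\ref{prop: equidist implies small proj}. So your proposal supplies a proof where the paper simply cites the classical result.
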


\begin{proof}[Proof of Lemma~\ref{lemma: measure increment}] If $B$ is $(q,\delta)$ equidistributed, then we are done. Otherwise, it follows from the Pointwise ergodic theorem (applied to the action $T^q$ and the indicator function of $B$) that there exists a $T^q$-ergodic component of $\mu$, say $\nu_1$, such that $\nu_1(B) \geq (1+\delta)\mu(B)$. Continuing in this fashion, we may produce a maximal sequence of Ergodic components $\nu_1,\nu_2 \ldots, \nu_J $ of $T^q,T^{q^2}, \ldots T^{q^J}$, respectively, such that $\nu_{j+1}(B) \geq (1+\delta) \nu_{j} (B)$. Clearly we must have $\epsilon(1+\delta)^J \leq 1$ and so this finishes the proof with $Q=q^J$. \end{proof}

We now turn to demonstrating the key spectral properties of a $(q,\delta)$-equidistributed set.

\begin{mydef}[Eigenspaces] \label{def: eigen} If $T: \bZ^d \curvearrowright (X,\mu)$ is a measure preserving action and $\chi \in \widehat{\bZ^d}$ is a character on $\bZ^d$, then we say that $f \in L^2(X,\mu)$ is a $\chi$-eigenfunction if $$T^a f= \chi(a)f \text{ for all } a \in \bZ^d.$$ We let $\text{Eig}_T(\chi)$ denote the space of $\chi$-eigenfunctions and for $R \subset  \widehat{\bZ^d}$ we let $$\text{Eig}_T(R)=\overline{\text{Span}\{ f \text{ } | \text{ } f \in \text{Eig}(\chi) \text{ for some } \chi \in R \}}^{L^2(X,\mu)}.$$
In particular, we will be intersted in the sets $R_q = \{ \chi \in \widehat{\bZ^d}  \text{ } |\text{ } \chi^q=1 \}$ and $R^*_q= R_q \setminus \{ 1 \}$, where $q \in \bZ$.  Note that the spaces $\text{Eig}_T(\chi)$ are orthogonal to eachother and hence $\text{Eig}_T(R)$ has an orthonormal basis consiting of $\chi$-eigenfunctions, for $\chi \in R$. Note also that Ergodicity implies that each $\text{Eig}_T(\chi)$ is at most one dimensional.\end{mydef}

\begin{prop} \label{prop: equidist implies small proj}  Let $T: \bZ^d \curvearrowright (X,\mu)$ be an ergodic measure preserving action and suppose that $B \subset X$ is $(q,\delta)$-equidistributed. Let $h \in L^2(X,\mu)$ be the orthogonal projection of $\mathds{1}_B$ onto $\text{Eig}_T(R^*_q)$. Then $$P_{T^q} \mathds{1}_B=\mu(B) + h$$ and $$\| h \|_2 \leq \sqrt{(2\delta + \delta^2)}\mu(B).  $$

\end{prop}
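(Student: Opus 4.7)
The plan is to first transfer the equidistribution hypothesis to a pointwise bound on $P_{T^q}\mathds{1}_B$ via the Pointwise Ergodic Theorem. Applying that theorem to the action $T^q$ and the function $\mathds{1}_B$, one obtains
$$P_{T^q}\mathds{1}_B(x) = \lim_{N \to \infty} \frac{1}{|F_N|}\sum_{a \in F_N} \mathds{1}_B(T^{qa}x) \leq (1+\delta)\mu(B)$$
for $\mu$-almost every $x$, where the final inequality is precisely the $(q,\delta)$-equidistribution hypothesis. Since $\mathds{1}_B \geq 0$ and $P_{T^q}$ is a conditional expectation, $P_{T^q}\mathds{1}_B$ is also nonnegative, so $0 \leq P_{T^q}\mathds{1}_B \leq (1+\delta)\mu(B)$ almost everywhere.

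Next, I would identify the $T^q$-invariant subspace of $L^2(X,\mu)$ with $\text{Eig}_T(R_q)$. The $T$-action restricts to a unitary action on $L^2(X,\mu)^{T^q}$, and on this subspace $T^{qa}$ acts as the identity for every $a \in \bZ^d$, so the action descends to a unitary representation of the finite abelian group $\bZ^d / q\bZ^d$. Every such representation decomposes as an orthogonal direct sum of its character subspaces, and the characters of $\bZ^d / q\bZ^d$ are precisely the elements of $R_q$. Thus
$$L^2(X,\mu)^{T^q} = \bigoplus_{\chi \in R_q} \text{Eig}_T(\chi) = \text{Eig}_T(R_q).$$
By ergodicity of $T$, the trivial-character piece $\text{Eig}_T(1)$ consists precisely of the constants and splits off orthogonally from $\text{Eig}_T(R^*_q)$. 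Writing $\mathds{1}_B = \mu(B) + h + g$ where $h$ is the projection of $\mathds{1}_B$ onto $\text{Eig}_T(R^*_q)$ and $g$ is orthogonal to $\text{Eig}_T(R_q)$, and noting that $P_{T^q}$ kills $g$, one reads off $P_{T^q}\mathds{1}_B = \mu(B) + h$, which is the first identity.

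For the norm bound, I would use orthogonality together with the pointwise bound from the first paragraph. Since $h$ is orthogonal to the constants in $L^2$, Pythagoras and $\mu(X)=1$ give
$$\mu(B)^2 + \|h\|_2^2 \;=\; \|P_{T^q}\mathds{1}_B\|_2^2 \;\leq\; \|P_{T^q}\mathds{1}_B\|_\infty^2 \;\leq\; (1+\delta)^2\mu(B)^2,$$
and therefore $\|h\|_2^2 \leq \bigl((1+\delta)^2 - 1\bigr)\mu(B)^2 = (2\delta + \delta^2)\mu(B)^2$, as claimed.

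The only non-routine step is the spectral identification $L^2(X,\mu)^{T^q} = \text{Eig}_T(R_q)$; the essential point is that the $T$-action on the $T^q$-invariant subspace factors through the finite abelian group $(\bZ/q\bZ)^d$, whose irreducible unitary representations are exactly the characters in $R_q$. Everything else is a direct application of the Pointwise Ergodic Theorem combined with elementary Hilbert space geometry.
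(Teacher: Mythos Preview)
Your proof is correct and follows essentially the same approach as the paper: identify $L^2(X,\mu)^{T^q}=\text{Eig}_T(R_q)$ via the representation theory of $(\bZ/q\bZ)^d$, deduce $P_{T^q}\mathds{1}_B=\mu(B)+h$ from ergodicity, and then combine the pointwise ergodic theorem with the $(q,\delta)$-equidistribution hypothesis to bound $\|P_{T^q}\mathds{1}_B\|_2^2\le(1+\delta)^2\mu(B)^2$ and conclude by Pythagoras. Your write-up simply makes explicit the intermediate step through $\|\cdot\|_\infty$ and the decomposition $\mathds{1}_B=\mu(B)+h+g$, which the paper leaves implicit.
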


\begin{proof} Note that\footnote{This follows from the fact that all finite dimensional representations of a finite abelian group can be decomposed into one dimensional representations.} $\text{Eig}_T(R_q)=L^2(X,\mu)^{T^q}$. This, together with the ergodicity of $T$, shows that \\$h = P_{T^q}\mathds{1}_B - \mu(B)$. Now the pointwise ergodic theorem, applied to the action $T^q$, combined with the $(q,\delta)$-equidistribution of $B$ immediately gives that $$\| h \|^2_2 = \|P_{T^q} \mathds{1}_B \|^2 - \| \mu(B) \|^2_2 \leq (1+\delta)^2\mu(B)^2 - \mu(B)^2= (2\delta + \delta^2) \mu(B)^2.$$ \end{proof} 

\section{Spherical mean ergodic theorem}

Our next result says that the ergodic averages along the discrete spheres $S_N$ of a well enough equidistributed set $B$ must almost converge (that is, are eventually very close to) in $L^2$ to the constant function $\mu(B)$.

\begin{thm}[Spherical Mean Ergodic theorem] \label{spherical mean ergodic thm}  Let $T: \bZ^d \curvearrowright (X,\mu)$ be an ergodic measure preserving action and suppose that $B \subset X$ is $(q_{\eta},\delta)$-equidistributed. Then \begin{align}\label{spherical mean estimate} \limsup_{N \to \infty}\left \| \mu(B) - \frac{1}{|S_{N}|} \sum_{a \in S_{N}} T^a \mathds{1}_B\right \|_2 \leq \sqrt{3\delta} + \eta. \end{align}
\end{thm}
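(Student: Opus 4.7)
The plan is to pass to the Fourier side via the spectral theorem and then split the resulting integral on $\widehat{\bZ^d} = \bT^d$ into a major-arc piece and a minor-arc piece, controlled respectively by the equidistribution hypothesis (Proposition~\ref{prop: equidist implies small proj}) and by the exponential sum bound on discrete spheres (Theorem~\ref{thm: exponential sums}).

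First I would set $f = \mathds{1}_B - \mu(B)$. Since the constant $\mu(B)$ is $T$-invariant, the left-hand side of (\ref{spherical mean estimate}) equals $\| \frac{1}{|S_N|} \sum_{a \in S_N} T^a f \|_2$. By the spectral theorem applied to the unitary $\bZ^d$-representation on $L^2(X,\mu)$, there is a finite positive Borel measure $\sigma_f$ on $\bT^d$ of total mass $\|f\|_2^2 \leq 1$ with $\langle T^c f, f \rangle = \int_{\bT^d} e^{2\pi i \langle c,\theta \rangle} d\sigma_f(\theta)$ for every $c \in \bZ^d$. Expanding the squared norm then yields the identity
\begin{equation*}
\left\| \frac{1}{|S_N|}\sum_{a \in S_N} T^a f \right\|_2^2 = \int_{\bT^d} |K_N(\theta)|^2 \, d\sigma_f(\theta), \qquad K_N(\theta) := \frac{1}{|S_N|}\sum_{a \in S_N} e^{2\pi i \langle a,\theta \rangle}.
\end{equation*}

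Next I would split this integral between the major arcs $M_{\eta,N} := \left( q_\eta^{-1}\bZ^d + [-(\eta N)^{-1/2},(\eta N)^{-1/2}]^d \right) \bmod \bZ^d$ and their complement. Off $M_{\eta,N}$, Theorem~\ref{thm: exponential sums} gives $|K_N(\theta)| \leq \eta$ as soon as $N \geq C\eta^{-4}$, so the minor-arc piece contributes at most $\eta^2 \sigma_f(\bT^d) \leq \eta^2$. On the major arcs I would use only the trivial bound $|K_N| \leq 1$, leaving $\sigma_f(M_{\eta,N})$. As $N \to \infty$ the sets $M_{\eta,N}$ decrease to the finite set $R_{q_\eta} = q_\eta^{-1}\bZ^d / \bZ^d \subset \bT^d$, so continuity of the finite measure $\sigma_f$ gives $\sigma_f(M_{\eta,N}) \to \sigma_f(R_{q_\eta})$. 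By ergodicity each $\text{Eig}_T(\chi)$ is at most one-dimensional, whence $\sigma_f(\{\chi\}) = |\langle f, \phi_\chi \rangle|^2$ for a unit $\chi$-eigenfunction $\phi_\chi$; summing over $\chi \in R_{q_\eta}$ and using that $f$ is orthogonal to the constant eigenfunction identifies $\sigma_f(R_{q_\eta}) = \| P_{\text{Eig}_T(R_{q_\eta}^*)} f \|_2^2 = \|h\|_2^2$, where $h$ is as in Proposition~\ref{prop: equidist implies small proj}. That proposition then bounds the major-arc contribution by $(2\delta+\delta^2)\mu(B)^2 \leq 3\delta$ (assuming $\delta \leq 1$; otherwise~(\ref{spherical mean estimate}) is immediate from $\|f\|_2 \leq 1$).

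Combining the two bounds and taking $\limsup$ yields $\limsup_N \|\cdot\|_2^2 \leq \eta^2 + 3\delta$, after which the elementary inequality $\sqrt{a+b} \leq \sqrt{a}+\sqrt{b}$ delivers~(\ref{spherical mean estimate}). The main point requiring care is the identification $\sigma_f(R_{q_\eta}) = \|h\|_2^2$, i.e.\ matching the spectral-measure description of eigenspace atoms with the orthogonal projection onto $\text{Eig}_T(R_{q_\eta}^*)$ used in Proposition~\ref{prop: equidist implies small proj}; once this bookkeeping is settled the rest is a standard circle-method split, with Theorem~\ref{thm: exponential sums} doing the work on minor arcs and the equidistribution hypothesis controlling the major arcs.
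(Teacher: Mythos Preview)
Your argument is correct and uses the same three ingredients as the paper (the spectral theorem, Theorem~\ref{thm: exponential sums}, and Proposition~\ref{prop: equidist implies small proj}), but you organize the decomposition differently. The paper first invokes a separate Lemma~\ref{lemma: vanishing mean} to discard the part of $\mathds{1}_B$ orthogonal to the rational Kronecker factor, then expands $P_{\textbf{Rat}}\mathds{1}_B$ as $\mu(B)+\sum_{\chi\in R_{q}^*}c_\chi\rho_\chi+\sum_{\chi\in\textbf{Rat}\setminus R_q}c_\chi\rho_\chi$ and bounds the two sums separately in $L^2$ via the triangle inequality, obtaining $\sqrt{3\delta}$ and $\eta$ directly. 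You instead stay on the spectral side throughout, splitting $\bT^d$ into shrinking major arcs $M_{\eta,N}$ and their complements; the irrational spectrum and the rational-but-not-$R_q$ spectrum are handled simultaneously by the minor-arc bound, while continuity of the finite measure $\sigma_f$ under $M_{\eta,N}\downarrow R_{q_\eta}$ replaces the explicit eigenfunction expansion. Your route is a bit more compact (it effectively inlines the proof of Lemma~\ref{lemma: vanishing mean}) and yields the slightly sharper $\sqrt{3\delta+\eta^2}$ before you weaken it to $\sqrt{3\delta}+\eta$; the paper's route makes the role of the rational Kronecker factor more visible. Both are sound, and the bookkeeping you flag for $\sigma_f(R_{q_\eta})=\|h\|_2^2$ is exactly right: the atom of $\sigma_f$ at $\chi$ is $\|P_{\text{Eig}_T(\chi)}f\|_2^2$, and since $f=\mathds{1}_B-\mu(B)$ is orthogonal to constants, summing over $R_{q_\eta}$ gives the projection onto $\text{Eig}_T(R_{q_\eta}^*)$ of either $f$ or $\mathds{1}_B$.
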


We first prove the following lemma using the spectral theorem. Using the notation introduced in Definition~\ref{def: eigen}, let $\textbf{Rat}=\bigcup_{q \in \bN} R_q$ denote the set of rational characters, let $L^2_{\textbf{Rat}}(X,\mu,T)=\textbf{Eig}_T(\textbf{Rat})$ denote the rational Kronecker factor and let $P_{\textbf{Rat}}:L^2(X,\mu) \to L^2_{\textbf{Rat}}(X,\mu,T)$ denote the orthogonal projection onto it.

\begin{lemma}\label{lemma: vanishing mean}  Let $T: \bZ^d \curvearrowright (X,\mu)$ be a measure preserving action and suppose that $f \in L^2_{\textbf{Rat}}(X,\mu,T)^{\perp}$. Then $$\lim_{N \to \infty} \frac{1}{|S_{N}|} \sum_{a \in S_{N}} T^af = 0.$$

\end{lemma}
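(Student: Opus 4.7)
The plan is to prove this via the spectral theorem for unitary actions of $\bZ^d$. Identifying $\widehat{\bZ^d}$ with $\bT^d$ via $\chi_\theta(a)=e^{2\pi i \langle a, \theta\rangle}$, to each $f \in L^2(X,\mu)$ one associates a positive finite spectral measure $\sigma_f$ on $\bT^d$ with $\langle T^a f, f\rangle = \int_{\bT^d} e^{2\pi i \langle a,\theta\rangle}\,d\sigma_f(\theta)$. Expanding the square and exchanging sum and integral, this gives
$$\left\| \frac{1}{|S_N|}\sum_{a \in S_N} T^a f \right\|_2^2 = \int_{\bT^d} \left| \frac{1}{|S_N|}\sum_{a \in S_N} e^{2\pi i \langle a,\theta\rangle} \right|^2 d\sigma_f(\theta).$$
Thus it suffices to show that the integrand tends to $0$ for $\sigma_f$-a.e. $\theta \in \bT^d$, since bounded convergence (the integrand is at most $1$) will then finish the job.

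Next I would identify the support of $\sigma_f$. The rational Kronecker factor $L^2_{\textbf{Rat}}(X,\mu,T)$ is, by definition, the closure of the span of $\chi$-eigenfunctions for $\chi$ ranging over the characters $\chi_\alpha$ with $\alpha \in \bQ^d/\bZ^d$. Standard spectral theory says $\sigma_f(\{\alpha\})>0$ if and only if the orthogonal projection of $f$ onto $\text{Eig}_T(\chi_\alpha)$ is non-zero. Hence the hypothesis $f \in L^2_{\textbf{Rat}}(X,\mu,T)^{\perp}$ forces $\sigma_f$ to have no atom at any rational point $\alpha$, and since $\bQ^d/\bZ^d$ is countable, $\sigma_f(\bQ^d/\bZ^d)=0$.

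Now fix $\theta \in \bT^d \setminus \bQ^d/\bZ^d$ and any $\eta>0$. The set $q_{\eta}^{-1}\bZ^d/\bZ^d \subset \bQ^d/\bZ^d$ is finite, hence the distance $c(\theta,\eta)$ from $\theta$ to it is strictly positive. For $N$ large enough we have both $N \geq C\eta^{-4}$ and $(\eta N)^{-1/2} < c(\theta,\eta)$, so $\theta$ lies outside the set $q_{\eta}^{-1}\bZ^d + [-(\eta N)^{-1/2},(\eta N)^{-1/2}]$ appearing in Theorem~\ref{thm: exponential sums}, which yields
$$\left| \frac{1}{|S_{N}|} \sum_{a \in S_{N}} e^{2\pi i \langle a , \theta \rangle}\right| \leq \eta.$$
Therefore $\limsup_{N}|\cdots| \leq \eta$ for every $\eta>0$, and the integrand vanishes pointwise on a set of full $\sigma_f$-measure.

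The step I expect to require the most care is the spectral identification: verifying that projections onto individual rational eigenspaces correspond exactly to atoms of $\sigma_f$ at rational characters. If one prefers to avoid invoking this portion of the spectral theorem directly, one can instead work from the observation that for each rational $\alpha$ the rank-one projection $f \mapsto \langle f, e_\alpha\rangle e_\alpha$ onto a $\chi_\alpha$-eigenfunction $e_\alpha$ (when one exists, which by ergodicity is at most unique up to scalar) contributes an atom at $\alpha$ to $\sigma_f$ of mass $|\langle f, e_\alpha\rangle|^2$; orthogonality of $f$ to $L^2_{\textbf{Rat}}$ forces all these coefficients to vanish. Everything else is routine once this is in hand.
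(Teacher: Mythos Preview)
Your proof is correct and follows essentially the same route as the paper: spectral measure, the identity $\|\,\cdot\,\|_2^2=\int|\widehat{S_N}(\theta)|^2\,d\sigma_f$, the observation that $f\perp L^2_{\textbf{Rat}}$ forces $\sigma_f(\bQ^d/\bZ^d)=0$, pointwise decay of the Weyl sum at irrational $\theta$ via Theorem~\ref{thm: exponential sums}, and dominated convergence. The only caveat is your parenthetical remark invoking ergodicity in the alternative argument at the end---the lemma does not assume ergodicity, so eigenspaces need not be one-dimensional; but your main argument (atoms of $\sigma_f$ detect nonzero projection onto $\text{Eig}_T(\chi_\alpha)$) does not rely on this and is fine as stated.
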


\begin{proof} By the spectral theorem there exists a positive finite Borel measure $\sigma$ on $\bT^d$ such that $$\langle T^af, f \rangle = \int \exp(2\pi i u \cdot a) d\sigma(u) \text{ for all } a \in \bZ^d.$$ Since $f \in L^2_{\textbf{Rat}}(X,\mu,T)^{\perp}$ we have that $\sigma ( \bQ^d/\bZ^d)=0$ and hence $$ \|\frac{1}{|S_{N}|} \sum_{a \in S_{N}} T^af \|^2_2 =\int_{\Omega} \left| \frac{1}{|S_{N}|} \sum_{a \in S_{N}} \exp(2\pi i \langle u , a\rangle ) \right|^2 d\sigma(u)  $$ where $\Omega=\bT^d \setminus (\bQ^d /\bZ^d).$ But  $$\frac{1}{|S_{N}|} \sum_{a \in S_{N}} \exp(2\pi i \langle u , a\rangle) \to 0 \text{ as } N \to \infty$$ for all $u \in \Omega$ by Theorem~\ref{thm: exponential sums}. Now the dominated convergence theorem finally completes the proof. \end{proof}

\begin{proof}[Proof of Theorem~\ref{spherical mean ergodic thm}] Let $q=q_{\eta}$. By Lemma~\ref{lemma: vanishing mean}, the left hand side of (\ref{spherical mean estimate}) remains unchanged if we replace $\mathds{1}_B$ with $P_{\textbf{Rat}} \mathds{1}_B$. We can write $$P_{\textbf{Rat}}\mathds{1}_B = \mu(B) + \sum_{\chi \in R^{\ast}_q} c_{\chi} \rho_{\chi} + \sum_{\chi \in \textbf{Rat} \setminus R_q} c_{\chi} \rho_{\chi}  $$ where $\rho_{\chi}$ is a $\chi$-eigenfunction of norm $1$ and $c_{\chi} \in \bC$. From Proposition~\ref{prop: equidist implies small proj} we get that \begin{align}\label{Rq estimate}\left\| \frac{1}{|S_N|} \sum_{a \in S_N} T^a \sum_{\chi \in R^{\ast}_q} c_{\chi} \rho_{\chi} \right\|^2_2 \leq \left\| \sum_{\chi \in R^{\ast}_q} c_{\chi} \rho_{\chi} \right\|^2_2 \leq (2\delta+\delta^2) \mu(B)^2 \leq 3\delta.\end{align} Now by Theorem~\ref{thm: exponential sums} we get that $$ \limsup_{N \to \infty} \left| \frac{1}{|S_N|} \sum_{a \in S_N} \chi(a) \right| \leq \eta \quad{ }\text{ for all } \chi \in \textbf{Rat} \setminus R_q.$$ This implies that \begin{align*} \limsup_{N \to \infty} \left\| \frac{1}{|S_{N}|} \sum_{a \in S_{N}} T^a \sum_{\chi \in \textbf{Rat} \setminus R_q} c_{\chi} \rho_{\chi}\right \|^2_2 &=  \limsup_{N \to \infty} \left\| \sum_{\chi \in \textbf{Rat} \setminus R_q}\left(\frac{1}{|S_{N}|} \sum_{a \in S_{N}} \chi(a) \right) c_{\chi} \rho_{\chi}\right \|^2_2 \\ &\leq \eta^2  \sum_{\chi \in \textbf{Rat} \setminus R_q} c^2_{\chi} \\&\leq \eta^2 \mu(B)^2 \leq \eta^2. \end{align*} Finally, combining this estimate with (\ref{Rq estimate}) and using the triangle inequality gives the desired estimate (\ref{spherical mean estimate}). \end{proof}

\section{Spherical Recurrence} 
\label{sec: spherical recurrence}

We are now in a position to prove Proposition~\ref{prop: ergodic magyar}. By Lemma~\ref{lemma: measure increment}, this reduces to the following.
\begin{prop}\label{prop: spherical recurrence} Fix $\epsilon>0$ and let $T: \bZ^d \curvearrowright (X,\mu)$ be an ergodic measure preserving action. Suppose that $B \subset X$ is $(q_{\eta},\delta)$-equidistributed where $\eta<\frac{1}{2}\epsilon$ and $\delta=\frac{1}{3}\eta^2$. Then there exists $N_0$ such that for all $N \geq N_0$ we have that $$\frac{1}{|S_N|} \sum_{a \in S_N} \mu(B \cap T^aB) > \mu(B)^2 - \epsilon.$$ 

\end{prop}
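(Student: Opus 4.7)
The plan is to recognize the left-hand side of the desired inequality as an inner product and then feed directly into the Spherical Mean Ergodic Theorem. Specifically, setting
\[
A_N \;=\; \frac{1}{|S_N|} \sum_{a \in S_N} T^a \mathds{1}_B,
\]
the invariance of $\mu$ gives $\mu(B \cap T^a B) = \langle \mathds{1}_B, T^a \mathds{1}_B \rangle_{L^2(\mu)}$, so the average on the left is exactly $\langle \mathds{1}_B, A_N \rangle$. Writing $A_N = \mu(B) + (A_N - \mu(B))$ and using $\langle \mathds{1}_B, \mu(B) \rangle = \mu(B)^2$, the problem reduces to bounding $|\langle \mathds{1}_B, A_N - \mu(B)\rangle|$.

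Next, I would apply Cauchy--Schwarz together with the trivial estimate $\|\mathds{1}_B\|_2 = \sqrt{\mu(B)} \leq 1$, yielding
\[
\left| \langle \mathds{1}_B, A_N - \mu(B) \rangle \right| \;\leq\; \|A_N - \mu(B)\|_2.
\]
Now the hypothesis is tailored to Theorem~\ref{spherical mean ergodic thm}: with $\delta = \tfrac{1}{3}\eta^2$ we get $\sqrt{3\delta} = \eta$, so the spherical mean ergodic theorem gives
\[
\limsup_{N \to \infty} \|A_N - \mu(B)\|_2 \;\leq\; \sqrt{3\delta} + \eta \;=\; 2\eta.
\]

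Finally, since $2\eta < \epsilon$ by assumption, we may pick $\tau > 0$ small enough that $2\eta + \tau < \epsilon$, and then choose $N_0$ large enough that $\|A_N - \mu(B)\|_2 \leq 2\eta + \tau < \epsilon$ for all $N \geq N_0$. Combined with the inner product decomposition, this immediately yields
\[
\frac{1}{|S_N|} \sum_{a \in S_N} \mu(B \cap T^a B) \;\geq\; \mu(B)^2 - \|A_N - \mu(B)\|_2 \;>\; \mu(B)^2 - \epsilon,
\]
which is the claim.

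There is really no serious obstacle here; the argument is essentially a bookkeeping exercise, with the only substantive content being the choice of parameters ($\delta = \eta^2/3$ to balance the two error terms $\sqrt{3\delta}$ and $\eta$ in the spherical mean ergodic theorem, and $\eta < \epsilon/2$ to absorb the resulting $2\eta$ into $\epsilon$). All the analytical work has already been done in Theorem~\ref{spherical mean ergodic thm}, which in turn rested on the Magyar--Stein--Wainger exponential sum estimates.
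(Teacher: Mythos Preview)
Your proof is correct and follows essentially the same approach as the paper: express the averaged intersection measure as the inner product $\langle \mathds{1}_B, A_N\rangle$, apply Cauchy--Schwarz, and invoke the Spherical Mean Ergodic Theorem together with the parameter choice $\delta = \tfrac{1}{3}\eta^2$, $\eta < \tfrac{1}{2}\epsilon$ to bound the error by $2\eta < \epsilon$. Your write-up is simply a more explicit version of the paper's three-line argument.
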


\begin{proof} From Cauchy-Schwartz followed by the Spherical Mean Ergodic theorem (Theorem~\ref{spherical mean ergodic thm}) we get that \begin{align*} |\frac{1}{|S_N|} \sum_{a \in S_N} \mu(B \cap T^aB) - \mu(B)^2| &=|\langle \mathds{1}_B,  \frac{1}{|S_N|} \sum_{a \in S_N}T^a \mathds{1}_B-\mu(B) \rangle | \\ &\leq \left \| \mathds{1}_B \right\|_2 \left\| \frac{1}{|S_N|} \sum_{a \in S_N}T^a\mathds{1}_B - \mu(B) \right \|_2 \\ &< \epsilon    \end{align*} for sufficiently large integers $N$. \end{proof}

\section{Locally isometric embeddings of trees} 

We now turn to proving our main recurrence result (Theorem~\ref{thm: intro tree recurrence}). To this end, it will be useful to relax the notion of a locally isometric embedding, introduced in Definition~\ref{def: intro trees}, to the broader notion of a \textit{locally isometric immersion}.

\begin{mydef}[Locally isometric immersions] Recall that an edge-labelled tree is a a tuple $\tau=(V,E,\phi,L)$ where $(V,L)$ is a finite tree (connected acyclic graph) with vertex set $V$ and edge set $E$ and $\phi:E \to L$ is a function to a set $L$. If $L \subset \bZ_{>0}$ then a \textit{locally isometric immersion} of $\tau$ into $\bZ^d$ is a map $\iota: V \to \bZ^d$ such that for each edge $e=\{v_1,v_2\} \in E$ we have that $$\|\iota(v_1)-\iota(v_2)\|^2=\phi(e).$$ As per Definition~\ref{def: intro trees}, we say that $\iota$ is a \textit{locally isometric embedding} if it is injective.  

\end{mydef}

By Lemma~\ref{lemma: measure increment}, the following result implies Theorem~\ref{thm: intro tree recurrence}.

\begin{thm} \label{thm: tree recurrence} Let $\epsilon>0$ and suppose that $T:\bZ^d \curvearrowright (X,\mu)$ is an Ergodic measure preserving action and $B \subset X$ is $(q_{\eta},\delta)$ equidistributed with $\eta<\frac{1}{2}\epsilon$ and $\delta=\frac{1}{3}\eta^2$. Then there exists $N_1$ such that for all edge-labelled trees $\tau=(V,E,\phi,\bZ_{>N_1})$, with $|V|=m$, there exists a locally isometric embedding $\iota$ of $\tau$ into $\bZ^d$ such that $$\mu\left( \bigcap_{v \in V} T^{\iota(v)}B \right) \geq \mu(B)^m - m\epsilon. $$

\end{thm}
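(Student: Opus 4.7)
The plan is to construct the embedding vertex by vertex, using Theorem~\ref{spherical mean ergodic thm} to control the loss in intersection measure at each step. Order the vertices as $v_1,\ldots,v_m$ via a BFS traversal of the tree rooted at $v_1$, so that each $v_i$ (for $i\geq 2$) has a unique neighbor $v_{\pi(i)}$ with $\pi(i)<i$. Set $\iota(v_1):=0$ and $\iota(v_i):=\iota(v_{\pi(i)})+a_i$ for some $a_i\in S_{N_i}$, where $N_i:=\phi(\{v_i,v_{\pi(i)}\})$. I would then prove by induction on $i$ that one can choose $a_2,\ldots,a_i$ making $\iota$ injective on $\{v_1,\ldots,v_i\}$ and satisfying
\[
\mu\Bigl(\bigcap_{k\leq i} T^{\iota(v_k)} B\Bigr) \;\geq\; \mu(B)^{i}-(i-1)\epsilon.
\]
The base case $i=1$ is immediate, and taking $i=m$ yields the desired bound since $(m-1)\epsilon\leq m\epsilon$.

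For the inductive step, set $F:=\prod_{k<i}\mathds{1}_{T^{\iota(v_k)} B}$, so $\|F\|_2\leq 1$. A short computation using $T$-invariance of $\mu$, the identity $\mathds{1}_{T^{b} B}=T^{-b}\mathds{1}_B$, unitarity of $T^b$ on $L^2(X,\mu)$, and the symmetry $S_{N}=-S_{N}$ yields, via Cauchy--Schwarz,
\[
\Bigl|\, \frac{1}{|S_{N_i}|}\sum_{a\in S_{N_i}} \mu\Bigl(\bigcap_{k<i} T^{\iota(v_k)} B \cap T^{\iota(v_{\pi(i)})+a} B\Bigr)-\mu(B)\,\mu\Bigl(\bigcap_{k<i} T^{\iota(v_k)} B\Bigr)\Bigr| \;\leq\; \Bigl\|\,\mu(B)-\frac{1}{|S_{N_i}|}\sum_{a\in S_{N_i}} T^{a}\mathds{1}_B\,\Bigr\|_2.
\]
Crucially the right-hand side does not depend on the previously chosen positions $\iota(v_1),\ldots,\iota(v_{i-1})$. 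By Theorem~\ref{spherical mean ergodic thm} together with $\sqrt{3\delta}+\eta=2\eta<\epsilon$, there exists $N_1^{(0)}=N_1^{(0)}(B,\epsilon)$ such that this $L^2$ norm is strictly less than $\epsilon$ whenever $N_i\geq N_1^{(0)}$; by pigeonhole, some $a_i\in S_{N_i}$ then attains at least the averaged measure.

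To also enforce injectivity, observe that at most $i-1<m$ values of $a_i\in S_{N_i}$ would produce a collision $\iota(v_i)=\iota(v_k)$ with $k<i$. Since $|S_N|\to\infty$ as $N\to\infty$ for $d\geq 5$, we may enlarge $N_1\geq N_1^{(0)}$ so that $(m-1)/|S_{N_i}|<\epsilon-2\eta$ for all $N_i\geq N_1$, leaving enough slack to discard the forbidden values while still attaining the averaged bound up to a total error of $\epsilon$. Combined with the inductive hypothesis and $\mu(B)\leq 1$, this gives $\mu(\bigcap_{k\leq i} T^{\iota(v_k)} B)\geq \mu(B)(\mu(B)^{i-1}-(i-2)\epsilon)-\epsilon\geq \mu(B)^{i}-(i-1)\epsilon$, closing the induction.

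The main obstacle is ensuring that the $L^2$ error bound in the inductive step is uniform over the previously chosen positions; without such uniformity, the inductive errors would compound multiplicatively rather than additively. This uniformity comes from unitarity of the Koopman representation, which absorbs the translation by $\iota(v_{\pi(i)})$ without changing $L^2$ norms and reduces the per-step error to a single translation-free spherical mean estimate, which Theorem~\ref{spherical mean ergodic thm} already supplies.
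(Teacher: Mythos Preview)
Your argument is correct. Both your proof and the paper's proceed by leaf-removal induction and rely on exactly the same key input: the uniform $L^2$ bound from Theorem~\ref{spherical mean ergodic thm}, with unitarity of $T^{\iota(v_{\pi(i)})}$ absorbing the dependence on previously chosen positions. The organizational difference is that the paper carries the full average over all immersions $\cI(\tau)$ through the induction, proving the $L^2$ estimate
\[
\Bigl\|\,\bE_{\iota\in\cI}\prod_{v\neq v_0} T^{\iota(v)}\mathds{1}_B-\mu(B)^{m-1}\,\Bigr\|_2\leq (m-1)\epsilon,
\]
and only at the end passes to embeddings via the counting bound $|\cE|/|\cI|\to 1$ and a single application of Cauchy--Schwarz. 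You instead apply Cauchy--Schwarz and pigeonhole at each step to commit to a specific $a_i$, handling injectivity locally by discarding at most $m-1$ bad values. Your greedy route is slightly more elementary (no immersion/embedding ratio estimate needed) and yields the existence statement directly; the paper's route gives the stronger intermediate $L^2$ estimate on the full ensemble of immersions, which is of independent interest. One minor quibble: your bookkeeping ``$L^2$ norm $<\epsilon$'' together with ``$(m-1)/|S_{N_i}|<\epsilon-2\eta$'' does not literally sum to $<\epsilon$; what you mean (and what the $\limsup\leq 2\eta$ actually gives) is that for large $N$ the $L^2$ norm is below, say, $2\eta+\tfrac{1}{2}(\epsilon-2\eta)$, and then $(m-1)/|S_{N_i}|<\tfrac{1}{2}(\epsilon-2\eta)$ closes the gap. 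This is purely cosmetic.
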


Before we embark on the proof, let us introduce the notion of a rooted edge-labelled tree.

\begin{mydef} A \textit{rooted edge-labelled tree} is a tuple $\tau=(V,v_0,E,\phi,L)$ where $(V,E,\phi,L)$ is an edge-labelled tree and $v_0 \in V$ is a distinguished vertex, which we call the root of $\tau$.

\end{mydef}

It will be convenient to use the averaging notation $$\mathbb{E}_{a \in A} f(a)=\frac{1}{|A|} \sum_{a \in A} f(a)$$ for finite sets $A$. 

\begin{proof}[Proof of Theorem~\ref{thm: tree recurrence}] Choose, by the Spherical Mean Ergodic theorem (Theorem~\ref{spherical mean ergodic thm}), a positive integer $N_0>0$ such that \begin{align}\label{spherical mean 1} \left\|  \bE_{a \in S_N} T^a \mathds{1}_B - \mu(B) \right\|_2 < \epsilon \textrm{ } \text{ for all } N> N_0. \end{align}

 Fix a rooted edge-labelled tree $\tau=(V,v_0,E,\phi,\bZ_{>N_0})$. We let $$\cI=\cI(\tau) = \{ \iota:V \to \bZ^d  \text{ } | \text{ } \iota \text{ is a locally isometric immersion of $\tau$ with } \iota(v_0)=0. \}.$$ We now aim to show that \begin{align}\label{tree correlation estimate} \left \| \mathbb{E}_{ \iota \in \cI} \prod_{v\in V \setminus \{v_0\}} T^{\iota(v)} \mathds{1}_B - \mu(B)^{m-1} \right \|_{L^2(X,\mu)} \leq (m-1) \epsilon. \end{align}

This may be proven by induction on $|V|=m$ as follows: The $m=2$ case is precisely the estimate (\ref{spherical mean 1}). Now suppose $m>2$ and let $e^*=\{v_1,v^*\}$ be an edge of $\tau$ such that $v^*$ is a leaf (i.e., $e^*$ is the only edge which contains it) and $v^*\neq v_0$. Now consider the rooted edge-labelled tree $\tau'=\tau - v^*$ obtained by deleting this leaf, more precisely $$\tau'=(V \setminus \{v^*\},v_0, E \setminus \{e^*\}, \phi|_{E \setminus \{e^*\}}, \bZ_{>N_0}),$$  and let $\mathcal{I}'=\mathcal{I}(\tau')$ be the corresponding set of immersions. We have the recursion 

\begin{align*} \mathbb{E}_{ \iota \in \cI} \prod_{v\in V \setminus \{v_0\}} T^{\iota(v)} \mathds{1}_B &= \mathbb{E}_{ \iota \in \cI'} \left( \left( \prod_{v\in V \setminus \{v_0,v^*\}} T^{\iota(v)} \mathds{1}_B \right) \left( \bE_{a \in S_{\phi(e^*)}} T^{\iota(v_1)+ a} \mathds{1}_B  \right) \right). \\ &=  \mu(B) \cdot \mathbb{E}_{ \iota \in \cI'}\left( \prod_{v\in V \setminus \{v_0,v^*\}} T^{\iota(v)} \mathds{1}_B \right) \\&+ \mathbb{E}_{ \iota \in \cI'} \left( \left( \prod_{v\in V \setminus \{v_0,v^*\}} T^{\iota(v)} \mathds{1}_B \right) \left( \bE_{a \in S_{\phi(e^*)}} T^{\iota(v_1)+ a} \mathds{1}_B - \mu(B)  \right) \right). \end{align*}

But the $L^2$-norm of second term is at most \begin{align*} \left\| \bE_{a \in S_{\phi(e^*)}} T^{\iota(v_1)+ a} \mathds{1}_B  - \mu(B) \right \|_{L^2(X,\mu)}  = \left\|  \bE_{a \in S_{\phi(e^*)}} T^{a} \mathds{1}_B  - \mu(B) \right \|_{L^2(X,\mu)} < \epsilon \end{align*} where in the equality we only used the fact that $T^{\iota(v_1)}$ is an isometry that fixes constant functions. Combining this estimate with the recursion and the inductive hypothesis finally completes the induction step, and thus establishes (\ref{tree correlation estimate}). Now let $\cE=\cE(\tau) \subset \cI(\tau)$ be those elements of $\cI$ which are embeddings.  Note that $$|\cI|=\prod_{e \in E} |S_{\phi(e)}|$$ and that $$ |\cE| \geq \prod_{j=1}^{|E|} (|S_{\phi(e_j)}|-j+1),$$ where $e_1,e_2, \ldots, e_{m-1}$ is some enumeration of $E$ such that $e_1$ contains $v_0$ and the subgraph with edges $e_1, \ldots, e_i$ is connected for all $i=1, \ldots, m-1$. This means that, as $N_0 \to \infty$, an arbitrarily large proportion of elements of $\cI$ are embeddings. More precisely, we have the uniform bound $$\frac{|\cE|}{|\cI|} \geq \left( 1 - \frac{m}{\sqrt{N_0}} \right)^m \to 1 \text{ as }N_0 \to \infty.$$ This means that we may choose $N_1=N_1(B,m,\epsilon)>N_0$ (it does not depend on the tree $\tau$, only on its size) such that for all trees of the form $\tau=(V,v_0,E,\phi,\bZ_{>N_1})$, with $|V|=m$,  we have that $$ \| \bE_{\iota \in \cE}  \prod_{v\in V \setminus\{v_0\}} T^{\iota(v)} \mathds{1}_B - \bE_{\iota \in \cI} \prod_{v\in V \setminus\{v_0\}} T^{\iota(v)} \mathds{1}_B\| < \epsilon,$$  where $\cE=\cE(\tau)$ and $\cI=\cI(\tau)$. Combining this with (\ref{tree correlation estimate}) we obtain that $$\left \| \mathbb{E}_{ \iota \in \cE} \prod_{v\in V \setminus\{v_0\}} T^{\iota(v)} \mathds{1}_B - \mu(B)^{m-1} \right \|_{L^2(X,\mu)} < m \epsilon. $$ Now Cauchy Schwartz gives (by the same argument as in the proof of Proposition~\ref{prop: spherical recurrence})  $$\left |\mathbb{E}_{ \iota \in \cE}\text{ } \mu \left ( \prod_{v\in V} T^{\iota(v)} B \right) - \mu(B)^{m} \right | < m \epsilon.$$ It now immediately follows that we may choose an embedding $\iota \in \cE$ satisfying the conclusion of the theorem. \end{proof}

\section{Optimal Pointwise Recurrence}
\label{sec: optimal recurrence}

We now establish a pointwise recurrence result, whose combinatorial consequence recovers a recent \textit{optimal unpinned distance} result obtained by Lyall and Magyar (Theorem 1 and Theorem 3 in \cite{LyallMagyar}). In what follows, we shall use the shorthand ``$P(b)$ holds for $\mu$-many $b \in B$'' to mean $\mu(\{ b \in B \text{ } | \text{ } P(b) \})>0$. 

\begin{prop}[Pointwise recurrence] Fix $\epsilon>0$. Let $T: \bZ^d \curvearrowright (X,\mu)$ be an ergodic measure preserving action and suppose that $B \subset X$, with $\mu(B)> \epsilon$, is $(q_{\eta},\delta)$-equidistributed  where $\eta<\frac{1}{2}\epsilon^2 $ and $\delta=\frac{1}{3}\eta^2$. Then there exists $N_0$ such that for all $N \geq N_0$ we have that $$\frac{1}{|S_N|} \sum_{a \in S_N} \mathds{1}_{B}(T^{qa}b) > \mu(B) - \epsilon \quad{ } \text{ for $\mu$-many} \text{ }  b \in B.$$ 

\end{prop}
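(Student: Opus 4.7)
The plan is to deduce the pointwise statement from the $L^{2}$-control provided by the Spherical Mean Ergodic theorem (Theorem~\ref{spherical mean ergodic thm}) via a single application of Chebyshev's inequality. Set
\[
f_{N} \;:=\; \frac{1}{|S_{N}|}\sum_{a\in S_{N}} T^{a}\mathds{1}_{B},
\]
so that $f_{N}(b)$ is precisely the spherical average appearing in the conclusion. With the prescribed choice $\delta=\eta^{2}/3$ we have $\sqrt{3\delta}+\eta = 2\eta$, and the $(q_{\eta},\delta)$-equidistribution hypothesis combined with Theorem~\ref{spherical mean ergodic thm} gives
\[
\|f_{N}-\mu(B)\|_{2} \;\leq\; 2\eta \;<\; \epsilon^{2}
\]
for every $N$ sufficiently large, say $N\geq N_{0}$.

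Chebyshev's inequality, applied to the mean-zero function $f_{N}-\mu(B)$, then yields
\[
\mu\!\left(\{x\in X : |f_{N}(x)-\mu(B)|\geq\epsilon\}\right) \;\leq\; \frac{\|f_{N}-\mu(B)\|_{2}^{2}}{\epsilon^{2}} \;<\; \epsilon^{2}.
\]
In particular the ``bad'' set $E := \{b\in B : f_{N}(b)\leq\mu(B)-\epsilon\}$ satisfies $\mu(E)<\epsilon^{2}$. Since by hypothesis $\mu(B)>\epsilon>\epsilon^{2}$, this forces $\mu(B\setminus E)>\mu(B)-\epsilon^{2}>0$, and on $B\setminus E$ the required inequality $f_{N}(b)>\mu(B)-\epsilon$ holds by the very definition of $E$. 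That is precisely the ``$\mu$-many $b\in B$'' conclusion (and in fact gives the quantitative strengthening that the set of good $b$ has measure at least $\mu(B)-\epsilon^{2}$).

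I do not expect any substantive obstacle: all of the analytic content -- the Magyar--Stein--Wainger exponential sum bounds of Theorem~\ref{thm: exponential sums} and the splitting of $P_{\textbf{Rat}}\mathds{1}_{B}$ into a $T^{q_{\eta}}$-invariant piece plus an ``irrational'' piece -- has already been absorbed into Theorem~\ref{spherical mean ergodic thm}, after which the pointwise version drops out by a one-line variance argument. The role of the slightly stronger hypothesis $\eta<\epsilon^{2}/2$ (as opposed to the $\eta<\epsilon/2$ used in Proposition~\ref{prop: spherical recurrence}) is precisely to make $4\eta^{2}/\epsilon^{2}$ smaller than $\mu(B)$, which is what Chebyshev requires in order to conclude that the complement of $E$ in $B$ is non-null.
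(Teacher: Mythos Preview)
Your argument is correct and follows essentially the same route as the paper: apply the Spherical Mean Ergodic Theorem to get $L^{2}$-closeness of the spherical averages to $\mu(B)$, then use a Markov/Chebyshev-type inequality to show the exceptional set has measure less than $\mu(B)$. The only cosmetic difference is that the paper bounds $\mu(U_{N})\leq \epsilon^{-1}\|f_{N}-\mu(B)\|_{1}\leq \epsilon^{-1}\|f_{N}-\mu(B)\|_{2}<\epsilon$ (Markov on the $L^{1}$ norm), whereas you square and use $\mu(E)\leq \epsilon^{-2}\|f_{N}-\mu(B)\|_{2}^{2}<\epsilon^{2}$; both yield the desired positivity of $\mu(B\setminus E)$.
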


More generally, we have the following pointwise multiple recurrence result.

\begin{prop}[Pointwise multiple recurrence] Fix $\epsilon>0$ and a positive integer $m$.  Let $T: \bZ^d \curvearrowright (X,\mu)$ be an ergodic measure preserving action and suppose that $B \subset X$, with $\mu(B)>\epsilon$, is $(q_{\eta},\delta)$-equidistributed where $\eta<\frac{1}{2}\epsilon^2 m^{-1}$ and $\delta=\frac{1}{3}\eta^2$. Then there exists a positive integer $N_0$ such that for all $N_0<N_1< \cdots < N_m$ we have that there exists $\mu$-many $b \in B$ such that for all $j \in \{1, \ldots, m \}$ we have that $$\frac{1}{|S_{N_j}|} \sum_{a \in S_{N_j}} \mathds{1}_{T^aB}(b) > \mu(B) - \epsilon.$$

\end{prop}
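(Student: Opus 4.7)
The plan is to reduce the pointwise multiple recurrence statement to a single $L^2$ estimate, then apply Chebyshev and a union bound. Define, for each positive integer $N$,
\[
f_N := \frac{1}{|S_N|} \sum_{a \in S_N} T^a \mathds{1}_B \in L^2(X,\mu).
\]
Since the discrete spheres are symmetric, $S_N = -S_N$, and $(T^a \mathds{1}_B)(b) = \mathds{1}_B(T^ab) = \mathds{1}_{T^{-a}B}(b)$, we have the pointwise identity $f_N(b) = \frac{1}{|S_N|}\sum_{a \in S_N} \mathds{1}_{T^aB}(b)$. The conclusion of the proposition is therefore equivalent to the assertion that, for all sufficiently large $N_1 < \cdots < N_m$, the set $B \cap \bigcap_{j=1}^m \{ f_{N_j} > \mu(B) - \epsilon\}$ has positive $\mu$-measure.

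Next I would invoke the Spherical Mean Ergodic Theorem (Theorem~\ref{spherical mean ergodic thm}). With the choice $\delta = \tfrac{1}{3}\eta^2$ the abstract bound $\sqrt{3\delta}+\eta$ collapses to $2\eta$, and the strict hypothesis $\eta < \epsilon^2/(2m)$ yields $2\eta < \epsilon^2/m$. Consequently there exists a positive integer $N_0$ such that
\[
\|f_N - \mu(B)\|_{L^2(X,\mu)} < \frac{\epsilon^2}{m} \qquad \text{for all } N > N_0.
\]

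The remainder is a Chebyshev plus union bound argument. For each $j \in \{1,\ldots,m\}$ set $G_j := \{ b \in X : f_{N_j}(b) \leq \mu(B) - \epsilon \}$. Then
\[
\epsilon^2 \mu(G_j) \leq \|f_{N_j} - \mu(B)\|_2^2 < \frac{\epsilon^4}{m^2},
\]
so $\mu(G_j) < \epsilon^2/m^2$ and the union bound gives $\mu\bigl(\bigcup_{j=1}^m G_j\bigr) < \epsilon^2/m$. Since $\mu(B) \leq 1$ forces $\epsilon < 1 \leq m$, we have $\epsilon^2/m < \epsilon < \mu(B)$, so $\mu\bigl(B \setminus \bigcup_j G_j\bigr) > 0$, which is precisely the stated conclusion.

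The main obstacle is really just bookkeeping the constants: the strict inequality $\eta < \epsilon^2/(2m)$ is precisely the slack required so that a Chebyshev bound of order $(\epsilon^2/m)^2/\epsilon^2 = \epsilon^2/m^2$ per index can survive an $m$-fold union bound and still leave room inside $B$. No new ingredient beyond the Spherical Mean Ergodic Theorem is needed, so the argument is considerably simpler than the preceding tree-recurrence proof.
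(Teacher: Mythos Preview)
Your proof is correct and follows essentially the same route as the paper: apply the Spherical Mean Ergodic Theorem to bound $\|f_N-\mu(B)\|_2$ by $2\eta<\epsilon^2/m$, use a tail estimate to bound the measure of the ``bad'' set for each $N_j$, and then a union bound to conclude that $B$ is not covered by the bad sets. The only cosmetic difference is that the paper applies Markov's inequality in $L^1$ (then bounds $\|\cdot\|_1\le\|\cdot\|_2$) to get $\mu(U_N)<\epsilon/m$, whereas you square and apply Chebyshev in $L^2$ to get the sharper but unnecessary $\mu(G_j)<\epsilon^2/m^2$; your explicit remark that $S_N=-S_N$ reconciles $T^a\mathds{1}_B$ with $\mathds{1}_{T^aB}$ is a point the paper leaves implicit.
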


\begin{remark} The combinatorial consequence of this result is is similair to a recent \textit{optimal pinned distances} result obtained by Lyall and Magyar (Theorem 2 and Theorem 4 in \cite{LyallMagyar}). The difference is that they instead have the hypothesis $\eta <<\epsilon^3$, where the implied constant does not depend on $m$, and in fact no other parameter depends on $m$ (in particular, $N_0$). It would be interesting to see whether this can be proven in the Ergodic setting.

\end{remark}

\begin{proof} We let $$U_N= \{ x \in X \text{ }: \text{ } |\mu(B) - \frac{1}{|S_N|} \sum_{a \in S_N} T^a \mathds{1}_B | \geq \epsilon \}.$$ We have that \begin{align*} \mu(U_N) & \leq \frac{1}{\epsilon} \left \| \mu(B) - \frac{1}{|S_N|} \sum_{a \in S_N} T^a \mathds{1}_B \right\|_1 \\ & \leq \frac{1}{\epsilon}\left \| \mu(B) - \frac{1}{|S_N|} \sum_{a \in S_N} T^a \mathds{1}_B \right\|_2\end{align*}  and so by Theorem~\ref{spherical mean ergodic thm} we have that$$\limsup_{N \to \infty} \mu(U_N) \leq \frac{1}{\epsilon} \cdot 2\eta  < \frac{\epsilon}{m}. $$ Hence for all sufficiently large $N_1, \ldots, N_m$ we have that $$\mu\left(B \setminus \bigcup_{j=1}^m U_{N_j}\right) >\epsilon - m \cdot \frac{\epsilon}{m}=0.$$ \end{proof}

\appendix
\section{Sets invariant under finite index subgroups}

For the sake of completeness, we provide a simple proof of the claims made in Definition~\ref{def: ergodic components of finite index} regarding the $T^Q$-invariant sets.

\begin{lemma} \label{lemma: sigma algebra without small sets is finite} Let $(X,\mathcal{A},\mu)$ be a probability space such that $\mathcal{A}$ does \textbf{not} have sets of arbitrarily small positive measure, i.e., $$ r:=\inf\{ \mu(A) \text{ } | A \in \mathcal{A}, \text{ } \mu(A)>0 \} >0. $$ Then the $\sigma$-algebra $\mathcal{A}$ is finite (modulo null sets, as always).

\end{lemma}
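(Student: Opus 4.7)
My plan is to prove the lemma by contradiction, using the hypothesis that every set in $\mathcal{A}$ has either zero measure or measure at least $r > 0$. Suppose $\mathcal{A}$ contains infinitely many classes modulo null sets, and choose a sequence $A_1, A_2, \ldots \in \mathcal{A}$ of pairwise inequivalent classes. Set $\mathcal{B}_n = \sigma(A_1, \ldots, A_n)$, a finite Boolean algebra whose atoms are the (at most $2^n$) sets of the form $\bigcap_{i=1}^n A_i^{\epsilon_i}$ with $\epsilon_i \in \{0,1\}$, where $A^0 = A$ and $A^1 = A^c$. Let $\alpha_n$ denote the number of atoms of $\mathcal{B}_n$ of positive measure.

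The first key observation is a counting bound: since every positive-measure set has measure $\geq r$, and the positive-measure atoms of $\mathcal{B}_n$ are pairwise disjoint subsets of the probability space $X$, we have $\alpha_n \leq \lfloor 1/r \rfloor$.

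The second key observation is a monotonicity statement: whenever $A_{n+1}$ is not equivalent mod null to any element of $\mathcal{B}_n$, we must have $\alpha_{n+1} > \alpha_n$. Indeed, each positive-measure atom $P$ of $\mathcal{B}_n$ falls into exactly one of three cases: (i) $\mu(P \setminus A_{n+1}) = 0$, (ii) $\mu(P \cap A_{n+1}) = 0$, or (iii) both $\mu(P \cap A_{n+1})$ and $\mu(P \setminus A_{n+1})$ are positive. If no atom is in case (iii), then $A_{n+1}$ agrees mod null with the union of those atoms in case (i), putting $A_{n+1} \in \mathcal{B}_n$ mod null and contradicting our hypothesis on $A_{n+1}$. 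Hence at least one atom $P$ is properly split, and by the hypothesis both pieces of this split have measure $\geq r$, so they are positive-measure atoms of $\mathcal{B}_{n+1}$; this strictly increases the count.

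Combining the two observations, the strictly increasing sequence $\alpha_n$ is bounded by $\lfloor 1/r \rfloor$, so it stabilizes: there exists $N$ with $\mathcal{B}_n = \mathcal{B}_N$ mod null for all $n \geq N$. But then every $A_k$ with $k > N$ lies in the finite algebra $\mathcal{B}_N$ mod null, which contains at most $2^N$ classes, contradicting the pairwise distinctness of the $A_i$. Hence $\mathcal{A}$ has only finitely many classes modulo null sets, as claimed. I expect the only delicate step to be justifying (iii) above — specifically, that the absence of case (iii) atoms forces $A_{n+1} \in \mathcal{B}_n$ mod null — but this is essentially immediate from countable additivity applied to the finite partition into atoms.
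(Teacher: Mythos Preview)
Your argument is correct, modulo two harmless slips of phrasing: the sequence $\alpha_n$ is not literally ``strictly increasing'' (it is non-decreasing, and the contrapositive of your second observation is what forces $\mathcal{B}_{n+1}=\mathcal{B}_n$ mod null once $\alpha_n$ has stabilised), and $\mathcal{B}_N$ has at most $2^{2^N}$ elements rather than $2^N$ --- but all you use is finiteness, so neither point affects the logic.

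The paper takes a genuinely different and more direct route. Rather than arguing by contradiction through a growing chain of finite subalgebras, it simply picks any $A_0\in\mathcal{A}$ with $\mu(A_0)<\tfrac{3r}{2}$ (such a set exists since $r$ is the infimum) and observes that $A_0$ must be an atom: if $A'\subset A_0$ had $0<\mu(A')<\mu(A_0)$ then $\mu(A')\geq r$ forces $\mu(A_0\setminus A')\leq r/2$, contradicting the definition of $r$. One then removes $A_0$ and repeats on $X\setminus A_0$, terminating in at most $\lfloor 1/r\rfloor$ steps with an atomic partition of $X$. This is shorter and constructive (it exhibits the atoms explicitly), whereas your approach is more combinatorial and yields the same atom-count bound $\lfloor 1/r\rfloor$ via the inequality $\alpha_n\leq\lfloor 1/r\rfloor$. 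Both are perfectly valid; the paper's argument is the more economical of the two.
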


\begin{proof} Let $r$ be as in the statement of the Lemma. Take $A_0 \in \mathcal{A}$ with $\mu(A_0)<\frac{3r}{2}$. If there exists an $\mathcal{A}$-measurable $A' \subset A_0$ with $0<\mu(A')< \mu(A_0)$ then $\mu(A') \geq r$ and so $\mu(A_0 \setminus A') \leq \frac{r}{2}$, contradicting the definition of $r$. Hence it follows that $A_0$ is an atom of $\mathcal{A}$. By considering the space $X \setminus A_0$, we may continue in this way to obtain a partition of $X$ into a finite union of atoms of $\mathcal{A}$. \end{proof}

\begin{prop} Let $G \curvearrowright (X,\mu)$ be an ergodic measure preserving action of a group $G$. Then for finite index normal subgroups $H \lhd  G$ there exists a $H$-invariant $C \subset X$ such that $H \curvearrowright (C, \mu( \cdot | C))$ is ergodic and a finite set $g_1, \ldots, g_n \in G$ such that the collection of sets $\{ g_i C \text{ } | \text{ } i =1, \ldots, n\}$ is a partition of $X$.

\end{prop}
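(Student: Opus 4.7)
The plan is to apply Lemma~\ref{lemma: sigma algebra without small sets is finite} to the $\sigma$-algebra $\mathcal{A}_H$ of (equivalence classes of) $H$-invariant measurable subsets of $X$, and then to exploit the $G$-action to extract the partition structure. First I would observe that, because $H$ is \emph{normal}, for any $A \in \mathcal{A}_H$ and any $g \in G$, the translate $gA$ is again $H$-invariant, since for $h \in H$ one has $h(gA) = g(g^{-1}hg)A = gA$ as $g^{-1}hg \in H$. Thus $G$ acts on $\mathcal{A}_H$.

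Next I would show that $\mathcal{A}_H$ admits no sets of arbitrarily small positive measure. Fix coset representatives $g_1, \ldots, g_n \in G$ with $n = [G:H]$. If $A \in \mathcal{A}_H$ has $\mu(A) > 0$, then the set $\widetilde{A} = \bigcup_{i=1}^n g_i A$ is $G$-invariant (any $g \in G$ sends each $g_i A$ to some $g_j A$, using the previous paragraph together with $g g_i \in g_j H$), hence by ergodicity of the $G$-action has full measure. Consequently $\mu(A) \geq 1/n$, and Lemma~\ref{lemma: sigma algebra without small sets is finite} applies to $\mathcal{A}_H$: there is a finite partition of $X$ (mod null sets) into $\mathcal{A}_H$-atoms $A_1, \ldots, A_k$, and $H$ acts ergodically on each $A_i$ equipped with $\mu(\cdot \mid A_i)$, since any $H$-invariant subset of an atom lies in $\mathcal{A}_H$ and therefore has measure $0$ or $\mu(A_i)$.

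Finally I would set $C = A_1$ and produce the coset representatives. The $G$-action on $\mathcal{A}_H$ sends atoms to atoms (if $A' \subset gA$ is in $\mathcal{A}_H$, then $g^{-1}A' \subset A$ is in $\mathcal{A}_H$, which forces $\mu(A') \in \{0, \mu(gA)\}$), and by ergodicity the $G$-orbit of $C$ must exhaust $\{A_1, \ldots, A_k\}$ since the union of any orbit is $G$-invariant and of positive measure. Hence $\{gC : g \in G\} = \{A_1, \ldots, A_k\}$, and choosing one representative $g_i$ from each left coset of $\mathrm{Stab}_G(C)$ yields the desired partition $\{g_iC\}_{i=1}^k$ of $X$, where $k = [G : \mathrm{Stab}_G(C)]$ divides $n$.

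The argument is essentially routine; the only point requiring a bit of care is the first one, namely that normality of $H$ is genuinely used to ensure $\mathcal{A}_H$ is $G$-invariant, so that both the lower bound $\mu(A) \geq 1/n$ and the transitivity of the $G$-action on atoms go through. Without normality, one would only get that $gAg^{-1}\cdots$-type translates preserve invariance, and the reduction to Lemma~\ref{lemma: sigma algebra without small sets is finite} would break.
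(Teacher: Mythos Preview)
Your proof is correct and follows essentially the same approach as the paper: define the $\sigma$-algebra of $H$-invariant sets, use normality to see it is $G$-stable, use ergodicity of $G$ to bound atom measures below by $1/[G:H]$, apply Lemma~\ref{lemma: sigma algebra without small sets is finite} to obtain an atom $C$, and then observe that the distinct $G$-translates of $C$ partition $X$. Your write-up is slightly more explicit (e.g.\ the verification that translates of atoms are atoms, and the identification of the $g_i$ with coset representatives of $\mathrm{Stab}_G(C)$), but the underlying argument is the same.
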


\begin{proof} Let $\mathcal{A} = \{ A \subset X \text{ } | \text{ } hA=A \text{ for all } h \in H \}$ be the $\sigma$-algebra of $H$ invariant sets. Notice that since $H$ is normal in $G$, we have that $\mathcal{A}$ is $G$-invariant (i.e., $gA \in \mathcal{A}$ for all $g \in G$ and $A \in \mathcal{A}$). Futhermore, as $H$ fixes each $A \in \mathcal{A}$, there is a natural action of $G/H$ on $\mathcal{A}$. From the ergodicity of $G$, it follows that for non-null $A \in \mathcal{A}$ we have that $$X= \bigcup_{g \in G} gA= \bigcup_{u \in G/H} uA$$ and so $\mu(A) \geq \frac{1}{|G/H|} >0.$ Applying Lemma~\ref{lemma: sigma algebra without small sets is finite} above we may take an atom $C \in \mathcal{A}$ of positive measure. Since $C$ does not contain any non-trivial element of $\mathcal{A}$, it follows that the action of $H$ on $C$ is ergodic, as desired. Any translate of $C$ is also an atom of $\mathcal{A}$, hence the distinct translates of $C$ are disjoint. Moreover, they cover $X$ by the ergodicity of the action of $G$. \end{proof}

\end{document}